\numberwithin{equation}{section}
\newtheorem{theorem}{Theorem}[section]
\newtheorem{lemma}[theorem]{Lemma}
\newtheorem{proposition}[theorem]{Proposition}
\newtheorem{corollary}[theorem]{Corollary}
\theoremstyle{definition}
\theoremstyle{remark}
\newtheorem{remark}[theorem]{Remark}
\newtheorem{fact}[theorem]{Fact}
\newtheorem{example}[theorem]{Example}
\newtheorem{question}[theorem]{Question}
\newcommand{\HH}{\operatorname{H}}
\newcommand{\Ass}{\operatorname{Ass}}
\newcommand{\assh}{\operatorname{Assh}}
\newcommand{\Spec}{\operatorname{Spec}}
\newcommand{\rad}{\operatorname{rad}}
\newcommand{\Ht}{\operatorname{ht}}
\newcommand{\pd}{\operatorname{p.dim}}
\newcommand{\Syz}{\operatorname{Syz}}
\newcommand{\rank}{\operatorname{rank}}
\newcommand{\V}{\operatorname{V}}
\newcommand{\Ext}{\operatorname{Ext}}
\newcommand{\Se}{\operatorname{S}}
\newcommand{\R}{\operatorname{R}}
\newcommand{\Supp}{\operatorname{Supp}}
\newcommand{\Tor}{\operatorname{Tor}}
\newcommand{\COH}{\operatorname{H}}
\newcommand{\Ann}{\operatorname{Ann}}
\newcommand{\depth}{\operatorname{depth}}
\newcommand{\coker}{\operatorname{coker}}
\newcommand{\lo}{\longrightarrow}
\newcommand{\fm}{\frak{m}}
\newcommand{\fp}{\frak{p}}
\newcommand{\fq}{\frak{q}}
\newcommand{\fn}{\frak{n}}
\begin{document}

\author[]{mohsen asgharzadeh}

\address{}
\email{mohsenasgharzadeh@gmail.com}

\title[ ]
{on the  dimension of syzygies}

\subjclass[2010]{ Primary 13D02  Secondary 13C15; 13D45}
\keywords{ Betti numbers; Buchsbaum rings; dimension theory; length function; reduced rings; support; syzygy modules}
\dedicatory{}
\begin{abstract}
In this  note we compute length, support and  dimension of syzygy modules of certain modules.
This partially answers questions asked by Huneke et al.
\end{abstract}

\maketitle

\section{Introduction}
In this note $(R,\fm,k)$ is a commutative noetherian local ring of dimension $d>0$ and $0\neq M$ is a finitely generated $R$-
module. The notation $\pd(-)$ stands for the projective dimension  and $\lambda(-)$ is the length function.  Let $i\in \mathbb{N}_0$. The $i^{th}$ \textit{betti number} of $M$ is given by $\beta_i(M):=\dim_k(\Tor^R_i(k,M))$. If there is no danger of confusion we will use $\beta_i$ instead of $\beta_i(M)$. A minimal free resolution of $M$ is of  the form
$\cdots \lo R^{\beta_{i+1}}\stackrel{f_{i+1}}\lo  R^{\beta_{i}}\lo\cdots\lo  R^{\beta_{0}}\lo M\lo 0.$
The $i^{th}$ \textit{syzygy} module of $M$ is $\Syz_i(M) := \coker(f_{i+1})=\ker(f_{i-1})$ for all $i>0$.
Computing numerical invariants of the syzygy modules is of some interest for a variety of reasons. Our first aim is  to compute the length of syzygy modules:

\begin{question}(See \cite[Question 1.2]{h})
 Let $M$ be such that $\pd_R(M) = \infty$ and $\lambda(M) <\infty$. Is $\lambda(\Syz_i(M))=\infty$ for all $i > d + 1$?
\end{question}

The assumption $d>0$ is really needed:
Indeed, let $R$ be a zero-dimensional local ring which is not a field. Then $\pd(R/\fm)=\infty$ and
each of its syzygy modules are nonzero. Since $R$ is zero-dimensional, any finitely generated module
is of finite length. So, $d$ should be positive. There are few progress concerning Question 1.1. Let us recall
an achievement from literature.
Recently, Huneke and his coauthors showed that Question 1.1 is true over 1-dimensional \textit{Buchsbaum} rings. Also, they showed the requirement of $i > d+1$  is necessary (over 1-dimensional rings):

\begin{example}
 Let $R := k[[x, y]]/(x^2, xy)$ and $M := R/(y)$. Then
$\pd_R(M) =\infty$, $\lambda(M)<\infty$, and $\lambda(\Syz_2(M))<\infty$. We should remark that the module  $N = R/(x)$ does not do the same job.
\end{example}

In support of Question 1.1 we present four observations. The first one  drops the dimension restriction from the Buchsbaum rings:

\textbf{Observation A.}
Let  $(R,\fm)$ be a Buchsbaum ring of dimension $d>1$,  $\pd_R(M) = \infty$ and $\lambda(M) <\infty$. Then $\Supp(\Syz_i(M))=\Spec(R)$ for all   $i >0$. In particular, $\lambda(\Syz_i(M))=\infty$.

We show a little more, please see Corollary \ref{m1}.
  We observed in Example 1.2 that  second syzygy module  (of a finite length module) may be of finite length.  If we focus on the simple module the story will changes:\\

\textbf{Observation B.} We reprove a result of Okiyama by a short argument:
\begin{enumerate}
\item[i)] If $R$ is  regular, then $\Syz_i(R/\fm)=0$  for all $i> d$ and $\dim(\Syz_i(R/\fm))=d$ for all $i\leq d$.
\item[ii)] If $R$ is not regular, then $\Supp(\Syz_i(R/\fm))=\Spec(R)$ for all $i>0$.
\end{enumerate}
Against Okiyama, we avoid Tate's approach of homology of local rings.
Our second aim is  to investigate the following question:

\begin{question}\label{1.5}(See \cite{ext} and \cite{b})
Is $\dim(\Syz_i(M))$ constant for all $i\gg 0$?
\end{question}

 To find a connection between Question 1.1 and Question \ref{1.5} let us revisit  Example 1.2, where  we observed that $\lambda(\Syz_1(M))=\infty$.
In fact, $ \Syz_1(M) =yR$. Also, $\Ann(yR)=xR$. Thus, $\Supp(\Syz_1(M))=\V(xR)=\Spec(R).$  In fact, the following  extends and corrects some  results from literature:\\

\textbf{Observation C.}\label{o}
Let $d>0$ and $0\neq M$ be a finite length module of infinite projective dimension. Then, for all $r\geq 0$ the following conditions are equivalent:
\begin{enumerate}
\item[i)]  $\Supp(\Syz_{r+1}(M))=\Spec(R)$,
\item[ii)] $\dim(\Syz_{r+1}(M))=\dim R$,
\item[iii)] $\lambda(\Syz_{r+1}(M))=\infty$.
\end{enumerate}

By $\assh(R)$ we mean the set of all prime ideals  $\fp$ such that $\dim (R)=\dim (R/\fp)$.
Here is an immediate application:
Let $R$ be a ring such that $\Ass(R)=\assh(R)$ (e.g. $R$ is Cohen-Macaulay), $\lambda(M)<\infty$ and $\pd_R(M) = \infty$. Then $\Supp(\Syz_i(M))=\Spec( R)$ for all $i>0$.
In particular, Question 1.1 and Question \ref{1.5} are true over integral domains. To see more applications of Observation C, please see Corollary \ref{1,3}
and Corollary \ref{mis}. Of course the last four results only work  for finite length modules, please see Example \ref{1.6}.
Section 2 is devoted to the proof of Observation C. In \S 3  we show:\\

\textbf{Observation D.}
Let $R$ be a  reduced local ring  and $M$ a finite length module of infinite projective dimension. Then $\Supp(\Syz_{i}(M))=\Spec (R)$ for all $i> 0$.

\S 4 is devoted to the proof of Observation A and Observation B. \S 5 runs Question 1.1 over some rings.
 Complete reduced rings are quotients of  regular local rings by a radical ideal.  Let us define the following related class of rings:
A ring is  called \textit{weakly reduced}
if it is quotient of a local ring  by a nonzero and  integrally closed ideal.
Recall that $I\subseteq \overline{I}\subseteq \rad(I)$. Thus, any complete reduced ring is weakly reduced. Also, we show:

\begin{corollary}\label{1.66}
Let $R$ be a weakly reduced ring of dimension $d>1$ and $M$ a finite length module of infinite projective dimension. Then $\Supp(\Syz_{i}(M))=\Spec (R)$ for all $i> 0$.
\end{corollary}

\section{From Question 1.1 to  Question \ref{1.5}}

 A finitely generated module $M$ is called \textit{locally free on the punctured spectrum}
if $M_{\fq}$ is free over $ R_{\fq}$ for all $\fq\in \Spec(R)\setminus\{ \fm\}$.
For example, any finite length module is locally free.
As another example:

\begin{remark}
Let $R$ be a 1-dimensional reduced local ring. Then  any finitely generated module  $M$ is locally free.
Indeed, since $R$ is reduced, $R$ satisfies  Serre's condition  $(\R_0)$. This means that  $R_{\fp}$ is  regular for all $\fp\in\Spec(R)\setminus\{\fm\}$. Zero-dimensional regular local rings
are field. In particular, any module over a such ring is a vector space.
\end{remark}

\begin{lemma}\label{d}
Let $(R,\fm)$ be equi-dimensional  and $M$  be  locally free on the punctured spectrum. Then either $\dim(\Syz_i(M)) = \dim R$ or $\lambda(\Syz_i(M))<\infty$.
\end{lemma}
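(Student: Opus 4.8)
The plan is to show first that every syzygy of a module that is locally free on the punctured spectrum is again locally free there, and then to trade the rank of $N:=\Syz_i(M)$ at a suitable non-maximal prime against the equidimensionality hypothesis.

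For the first step I would fix $\fq\in\Spec(R)\setminus\{\fm\}$ and localize a minimal free resolution $\cdots\lo R^{\beta_{i+1}}\stackrel{f_{i+1}}{\lo}R^{\beta_i}\lo\cdots\lo R^{\beta_0}\lo M\lo 0$ at $\fq$; localization is exact, so this stays exact and is a (no longer minimal) free resolution of $M_\fq$. Since $M_\fq$ is $R_\fq$-free it is projective, hence the localized resolution is split exact in positive degrees, and therefore $N_\fq=\coker(f_{i+1}\otimes_R R_\fq)$ is a direct summand of $R_\fq^{\beta_i}$ and in particular $R_\fq$-free.

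Then I would argue the dichotomy as follows. If $\lambda(N)<\infty$ there is nothing to prove, so assume $\lambda(N)=\infty$, i.e.\ $\Supp(N)\ne\{\fm\}$, and pick $\fq\in\Spec(R)\setminus\{\fm\}$ with $N_\fq\ne 0$. By the previous paragraph $N_\fq\cong R_\fq^{\,r}$ with $r\ge 1$. Choosing a minimal prime $\fp_0$ of $R$ contained in $\fq$ (possible as $R$ is noetherian) and localizing once more gives $N_{\fp_0}\cong R_{\fp_0}^{\,r}\ne 0$, so $\fp_0\in\Supp(N)$; equidimensionality then yields $\dim R/\fp_0=\dim R$, whence $\dim N=\dim R/\Ann(N)\ge\dim R/\fp_0=\dim R$, and the reverse inequality is automatic.

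The one point that needs genuine care is the splitting in the first step: although the resolution is minimal (so visibly non-split after $\otimes_R k$), one must check that its localization at a prime where $M$ becomes free is honestly split exact, so that the localized syzygy is a summand of a free module. Everything after that is bookkeeping with supports together with the formula $\dim N=\sup\{\dim R/\fp:\fp\in\Supp N\}$.
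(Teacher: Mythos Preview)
Your proof is correct and follows essentially the same approach as the paper: both localize the minimal free resolution at a non-maximal prime, use freeness of $M_\fq$ to split it and conclude $\Syz_i(M)_\fq$ is free, and then invoke equidimensionality. The only cosmetic difference is in the bookkeeping---the paper argues by contradiction that any minimal element of $\Supp(\Syz_i(M))$ not lying in $\min(R)$ must equal $\fm$, whereas you go the other direction and directly produce a minimal prime $\fp_0\subseteq\fq$ of $R$ lying in $\Supp(\Syz_i(M))$ via a further localization; both routes arrive at the same conclusion.
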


In the next argument, we are not in a position to drop the equi-dimensional assumption.

\begin{proof}
If $d=\dim R\leq 1$ there is nothing to prove. In particular, we may assume that $d>0$ and that $\Syz_i(M)\neq 0$. If $\min\left(\Supp(\Syz_i(M))\right)\subset \min(R)$ were be the case, then we should have $\dim(\Syz_i(M)) = \dim R$, because $R$ is euqi-dimensional. Thus, we can assume $\min(\Supp(\Syz_i(M)))\nsubseteqq \min(R)$.
Let $\fp\in\min(\Supp(\Syz_i(M)))\setminus \min(R)$. We claim that $\fp$ is the maximal ideal. Suppose on the contrary that
$\fp\neq \fm$. To search a contradiction, we look at $\textbf{F}\to M\to 0$ the minimal free resolution of $M$. Since $M$ is locally free, $M_{\fp}$ is free. Consequently, $\textbf{F}_{\fp}\to M_{\fp}\to 0$ splits. It turns out that $\Syz_i(M)_{\fp}$ is free. Therefore, $\dim (\Syz_i(M)_{\fp})=\dim (R_{\fp})>0$. Since $\fp\in \min(\Supp(\Syz_i(M)))$ we get to a contradiction.
\end{proof}

However, if $M$ is of finite length we are able to  drop the equi-dimensional assumption:

\begin{lemma}\label{o}
Let $d>0$ and $0\neq M$ be finite length module of infinite projective dimension. Then, for all $r\geq 0$ the following conditions are equivalent:
\begin{enumerate}
\item[1)]   $\lambda(\Syz_{r+1}(M))=\infty$,
\item[2)] $\sum_{i=0}^r(-1)^{r-i}\beta_{i}(M)>0$,
\item[3)] $\Supp(\Syz_{r+1}(M))=\Spec(R)$,
\item[4)]  $\dim(\Syz_{r+1}(M))=\dim R$.
\end{enumerate}
\end{lemma}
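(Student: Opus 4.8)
The plan is to establish the cycle of implications (ii) $\Rightarrow$ (iv) $\Rightarrow$ (iii) $\Rightarrow$ (i) $\Rightarrow$ (ii), with the equivalence (i) $\Leftrightarrow$ (ii) being the arithmetic heart of the matter. First I would use the minimal free resolution $\cdots \to R^{\beta_{r+1}} \to R^{\beta_r} \to \cdots \to R^{\beta_0} \to M \to 0$ and truncate it. Since $M$ has finite length, $M_{\fp} = 0$ for every nonmaximal prime $\fp$, so localizing the truncated complex $0 \to \Syz_{r+1}(M) \to R^{\beta_r} \to \cdots \to R^{\beta_0} \to M \to 0$ at such $\fp$ gives an exact sequence showing $\Syz_{r+1}(M)_{\fp}$ is (stably) free of rank $\sum_{i=0}^r (-1)^{r-i}\beta_i$ — here I use the standard rank-counting in an exact complex of free modules over the domain $R_{\fp}$ (or just the additivity of rank on the exact sequence after inverting a non-zero-divisor; note $\Ht \fp \geq 0$ and $R_{\fp}$ need not be a domain, so I would phrase the rank count via length or via $\kappa(\fp) = R_{\fp}/\fp R_{\fp}$, tensoring the exact complex with the residue field at a minimal prime below $\fp$). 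Call this integer $\rho := \sum_{i=0}^r(-1)^{r-i}\beta_i(M)$; it is independent of $\fp$.

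For (ii) $\Rightarrow$ (iv): if $\rho > 0$, then $\Syz_{r+1}(M)_{\fp}$ has positive rank, hence is nonzero, for every nonmaximal $\fp$; in particular $\Supp(\Syz_{r+1}(M)) \supseteq \Spec(R)\setminus\{\fm\}$, and since the support is closed and $d > 0$ forces $\fm$ to be in the closure of that set, $\Supp(\Syz_{r+1}(M)) = \Spec(R)$, giving both (iii) and $\dim(\Syz_{r+1}(M)) = \dim R$, i.e. (iv). The implications (iv) $\Rightarrow$ (iii) and (iii) $\Rightarrow$ (i) are essentially immediate: (iv) $\Rightarrow$ (iii) because a proper closed subset of $\Spec(R)$ of dimension $d$ would have to contain a prime of coheight $0$ other than... actually more simply, $\dim(\Syz_{r+1}(M)) = \dim R$ together with $R$ local means $\V(\Ann \Syz_{r+1}(M))$ contains a prime of dimension $d$; but one must be slightly careful since $R$ need not be equidimensional, so I would instead argue (iv) $\Rightarrow$ (ii) directly and close the loop differently — see below. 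And (iii) $\Rightarrow$ (i) holds because a module supported everywhere cannot have finite length once $d > 0$ (a finite-length module is supported only at $\fm$).

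The cleanest organization, then, is: prove (i) $\Leftrightarrow$ (ii), prove (ii) $\Rightarrow$ (iii), prove (iii) $\Rightarrow$ (iv), prove (iv) $\Rightarrow$ (i). For (i) $\Leftrightarrow$ (ii): the module $\Syz_{r+1}(M)$ has finite length if and only if it is supported only at $\fm$, if and only if its localization at every nonmaximal prime vanishes, if and only if $\rho = 0$ — wait, one direction needs $\pd_R M = \infty$. If $\rho = 0$ then $\Syz_{r+1}(M)_{\fp}$ is stably free of rank $0$ over $R_{\fp}$ for nonmaximal $\fp$, hence $\Syz_{r+1}(M)$ is supported only at $\fm$ and so has finite length; conversely if $\lambda(\Syz_{r+1}(M)) < \infty$ then $\Syz_{r+1}(M)_{\fp} = 0$ for $\fp \neq \fm$, forcing $\rho = 0$. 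Note $\rho \geq 0$ always: it is the generic rank of $\Syz_{r+1}$, which is a submodule of a free module, and ranks of such are nonnegative — or just note it equals $\dim_{\kappa(\fp)}$ of a vector space. Hence $\rho > 0 \iff \rho \neq 0 \iff \lambda(\Syz_{r+1}(M)) = \infty$. This settles (i) $\Leftrightarrow$ (ii). (ii) $\Rightarrow$ (iii) is the support computation above; (iii) $\Rightarrow$ (iv) since $\Supp = \Spec(R)$ gives $\dim \Syz_{r+1}(M) = \dim R$ outright; (iv) $\Rightarrow$ (i) because $\dim R = d > 0$ and a finite-length module has dimension $0$.

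\textbf{Main obstacle.} The subtle point is the rank count showing $\Syz_{r+1}(M)_{\fp}$ has a well-defined rank equal to $\rho$ independent of $\fp$, when $R_{\fp}$ is not assumed to be a domain. I would handle this by passing to the residue field $\kappa(\fp) = R_{\fp}/\fp R_{\fp}$ of a minimal prime $\fp$ contained in the given nonmaximal prime (equivalently work at minimal primes of $R$, which is enough to detect nonvanishing of the support over the punctured spectrum): tensoring the exact complex $0 \to \Syz_{r+1}(M)_{\fp} \to R^{\beta_r}_{\fp} \to \cdots \to R^{\beta_0}_{\fp} \to 0$ with $\kappa(\fp)$ is still exact because each map splits (as $M_{\fp}=0$, the whole truncated resolution is a split exact complex of free $R_{\fp}$-modules), so the alternating sum of dimensions gives $\dim_{\kappa(\fp)} (\Syz_{r+1}(M)\otimes \kappa(\fp)) = \rho$, and since the complex splits this also equals the rank of the projective (hence free, over local $R_{\fp}$) module $\Syz_{r+1}(M)_{\fp}$. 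The role of $\pd_R M = \infty$ is only to guarantee $\Syz_{r+1}(M) \neq 0$ so that the dichotomy "finite length vs. dimension $d$" is not vacuous; it is not otherwise needed in the equivalences once they are phrased via $\rho$.
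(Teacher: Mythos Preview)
Your proof is correct and follows essentially the same route as the paper: localize the truncated resolution at a nonmaximal prime $\fp$, use $M_{\fp}=0$ to obtain a split exact complex of free $R_{\fp}$-modules with $\Syz_{r+1}(M)_{\fp}$ free of rank $\rho=\sum_{i=0}^r(-1)^{r-i}\beta_i$, and then read off all four equivalences from whether $\rho>0$ or $\rho=0$ (the paper runs the cycle i)$\Rightarrow$ii)$\Rightarrow$iii)$\Rightarrow$iv)$\Rightarrow$i), but your reorganization is immaterial). Your ``main obstacle'' is not one: once the complex splits over the commutative local ring $R_{\fp}$, the syzygy is free and its rank is well-defined by invariant basis number, so the alternating-sum identity is immediate and no detour through residue fields or minimal primes is needed.
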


\begin{proof}
First we recall  a routine fact.
 Let $\fp\neq\fm$ be a prime ideal. Such a thing exists, because $d>0$. Keep in mind that $M$ is of finite length. Since $M_{\fp}=0$, we have the following split exact sequence:$$0 \lo \Syz_{r+1}(M)_{\fp}\lo R^{\beta_{r}}_{\fp}\lo  \cdots\lo  R^{\beta_{0}}_{\fp}\lo 0.$$Since the sequence splits, $\Syz_{r+1}(M)_{\fp}$ is free.  So, $$\sum_{i=0}^{r}(-1)^{r-i}\beta_{i}(M)=\rank(\Syz_{r+1}(M)_{\fp})\geq 0\quad(\ast)$$
\begin{enumerate}
\item[$1)\Rightarrow 2)$]
Let $\fp\neq\fm$ be a prime ideal in $\Supp(\dim\Syz_{r+1}(M))$. By the assumption such a $\fp$ exists. Thus  $\Syz_{r+1}(M)_{\fp}$ is a nonzero free module. Therefore, $\rank(\Syz_{r+1}(M)_{\fp})> 0$.
From $(\ast)$ we get that $\sum_{i=0}^r(-1)^{r-i}\beta_{i}(M)>0$, as claimed.
\item[$2)\Rightarrow 3)$]   Let $\fp\in \Spec(R)\setminus\{\fm\}$. By the assumption,
$\sum_{i=0}^{r}(-1)^{r-i}\beta_{i}(M)> 0$ . In view of $(\ast)$  we have $\rank(\Syz_{r+1}(M)_{\fp})>0$. Therefore, $\fp\in \Supp(\Syz_{r+1}(M))$. Thus, $\Spec(R)\setminus\{\fm\}\subset\Supp(\Syz_{r+1}(M))$. One has $\Syz_{r+1}(M)\neq 0$. Hence
 $\fm\in \Supp(\Syz_{r+1}(M))$. Consequently, $\Spec(R)\subset\Supp(\Syz_{r+1}(M)).$ The reverse inclusion always hold. So, $\Spec(R)=\Supp(\Syz_{r+1}(M))$ as claimed.
\item[$3)\Rightarrow 4)$] This is clear.
\item[$4)\Rightarrow 1)$]  Since $d>0$, a finitely generated module of dimension $d$ is of infinite length. Thus, $\lambda(\Syz_{r+1}(M))=\infty$.
\end{enumerate}
\end{proof}

\begin{corollary}\label{1,3}
Let $M$ be a finite length module and of infinite projective dimension over a  1-dimensional local ring $R$. Then $\Supp(\Syz_{1}(M))=\Supp(\Syz_{3}(M))=\Spec(R)$.
 \end{corollary}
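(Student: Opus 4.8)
The plan is to derive both equalities from Lemma \ref{o}. The case of $\Syz_1(M)$ is immediate: taking $r=0$ there, condition (ii) reads $\beta_0(M)>0$, which holds since $M\neq 0$, so condition (iii) gives $\Supp(\Syz_1(M))=\Spec(R)$. For $\Syz_3(M)$ I take $r=2$ in Lemma \ref{o}, so it suffices to show that $\Syz_3(M)$ is not of finite length, equivalently that $\beta_0-\beta_1+\beta_2>0$, where $\beta_i:=\beta_i(M)$.

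So suppose, toward a contradiction, that $\lambda(\Syz_3(M))<\infty$. Since $\pd_R(M)=\infty$ we have $\Syz_3(M)\neq 0$, and since $\dim\Syz_3(M)=0$ we have $\COH^0_\fm(\Syz_3(M))=\Syz_3(M)$ and $\COH^1_\fm(\Syz_3(M))=0$. Put $h:=\lambda(\COH^0_\fm(R))$, which is finite. I would then apply $\COH^0_\fm(-)$ and the long exact local cohomology sequence to the three short exact sequences $0\to\Syz_{i+1}(M)\to R^{\beta_i}\to\Syz_i(M)\to 0$ for $i=0,1,2$ (with $\Syz_0(M)=M$), using $\COH^0_\fm(R^{\beta_i})=\COH^0_\fm(R)^{\beta_i}$, of length $\beta_i h$. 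From $\Syz_1(M)\subseteq R^{\beta_0}$ one gets $\lambda(\COH^0_\fm(\Syz_1(M)))\le\beta_0 h$; feeding this into the case $i=1$ gives $\lambda(\COH^0_\fm(\Syz_2(M)))\ge(\beta_1-\beta_0)h$; and the case $i=2$, together with $\COH^1_\fm(\Syz_3(M))=0$, is exact as $0\to\Syz_3(M)\to\COH^0_\fm(R)^{\beta_2}\to\COH^0_\fm(\Syz_2(M))\to 0$, whence $\lambda(\Syz_3(M))=\beta_2 h-\lambda(\COH^0_\fm(\Syz_2(M)))\le(\beta_0-\beta_1+\beta_2)h$.

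Finally, applying the formula $(\ast)$ from the proof of Lemma \ref{o} at a prime $\fp\neq\fm$ (which, as $\dim R=1$, is a minimal prime, and one exists) gives $\beta_0-\beta_1+\beta_2=\rank(\Syz_3(M)_\fp)$; but a finite length $\Syz_3(M)$ is supported only at $\fm$, so $\Syz_3(M)_\fp=0$ and this rank vanishes. Hence $\lambda(\Syz_3(M))\le 0$, forcing $\Syz_3(M)=0$, which contradicts $\pd_R(M)=\infty$. Therefore $\Syz_3(M)$ is not of finite length, and Lemma \ref{o} yields $\Supp(\Syz_3(M))=\Spec(R)$. The crux — and the only place where $\dim R=1$ enters rather than merely $\dim R>0$ — is the estimate $\lambda(\Syz_3(M))\le(\beta_0-\beta_1+\beta_2)\,\lambda(\COH^0_\fm(R))$; when $R$ is Cohen-Macaulay it is trivial since $\COH^0_\fm(R)=0$, so the real content, and the main obstacle, is the case $\depth R=0$, which is handled precisely by routing the length count through $\COH^0_\fm$ across the three syzygy extensions.
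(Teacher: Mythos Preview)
Your argument is correct. The paper's own proof is much shorter but not self-contained: it simply cites \cite[Corollary~5.10]{h} for the fact that $\lambda(\Syz_1(M))=\lambda(\Syz_3(M))=\infty$ over a $1$-dimensional ring, and then invokes Lemma~\ref{o}. You instead supply a direct local-cohomology length estimate, threading the three syzygy short exact sequences through $\COH^0_\fm$ to obtain $\lambda(\Syz_3(M))\le(\beta_0-\beta_1+\beta_2)\,\lambda(\COH^0_\fm(R))$, and then noting that the alternating sum vanishes once $\Syz_3(M)$ is assumed to have finite length. This buys you an argument entirely internal to the present paper. It is also worth observing that your proof never actually uses $\dim R=1$ beyond $\dim R>0$: the parenthetical ``$\fp$ is a minimal prime'' is true but not needed, since all that is used is $M_\fp=0$ and $\Syz_3(M)_\fp=0$, which follow from finite length. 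So your method in fact yields $\Supp(\Syz_3(M))=\Spec(R)$ for any $d>0$, recovering for $i=3$ what the paper later gets from the new intersection theorem (Lemma~\ref{new}) when $d\ge 3$, and covering $d=2$ as well without any extra hypothesis.
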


\begin{proof}
 In view of \cite[Corollary 5.10]{h}, we  see   $\lambda(\Syz_{1}(M))=\lambda(\Syz_{3}(M))=\infty$.
We apply this along with Lemma \ref{o} to conclude that $\Supp(\Syz_{1}(M))=\Supp(\Syz_{3}(M))=\Spec(R).$
\end{proof}

For the simplicity of the reader we cite:
\begin{lemma}(See \cite[Proposition 5.5]{h}) \label{desend}
Let $R$ be a local ring of positive dimension. Suppose there is
an $R$-module $M$ of infinite projective dimension and finite length such that
$\lambda(\Syz_{i+1}(M))<\infty$  for some fixed $i > 0$. If $\beta_i(M) \geq \beta_{i-1}(M)$, then
$\lambda(\Syz_{i-1}(M))<\infty$.
\end{lemma}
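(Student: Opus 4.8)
The plan is to convert both the hypothesis and the desired conclusion into inequalities among the Betti numbers $\beta_{j}(M)$ by means of the numerical criterion already available in Lemma \ref{o}, and then to compare the two alternating sums. First I would dispose of the boundary value $i=1$: in that case $\Syz_{i-1}(M)=\Syz_{0}(M)=M$ has finite length by hypothesis, so there is nothing to prove. Hence I may assume $i\ge 2$, which makes $r=i-2$ an admissible (nonnegative) index in Lemma \ref{o}.

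Next I would invoke Lemma \ref{o} twice. With $r=i$ it says that the hypothesis $\lambda(\Syz_{i+1}(M))<\infty$ is equivalent to $\sum_{j=0}^{i}(-1)^{i-j}\beta_{j}(M)\le 0$. With $r=i-2$, and using $(-1)^{i-2-j}=(-1)^{i-j}$, it says that the conclusion $\lambda(\Syz_{i-1}(M))<\infty$ is equivalent to $\sum_{j=0}^{i-2}(-1)^{i-j}\beta_{j}(M)\le 0$. Peeling the top two summands off the first sum gives
$$\sum_{j=0}^{i-2}(-1)^{i-j}\beta_{j}(M)=\sum_{j=0}^{i}(-1)^{i-j}\beta_{j}(M)-\bigl(\beta_{i}(M)-\beta_{i-1}(M)\bigr),$$
and on the right-hand side the first term is $\le 0$ by the hypothesis, while $\beta_{i}(M)-\beta_{i-1}(M)\ge 0$ by the assumption $\beta_{i}(M)\ge\beta_{i-1}(M)$; therefore the left-hand side is $\le 0$, which is exactly the criterion forcing $\lambda(\Syz_{i-1}(M))<\infty$.

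I do not anticipate a genuine obstacle: once Lemma \ref{o} is in hand the whole argument is bookkeeping, and the only point needing attention is the degenerate index $i=1$. If one wanted to avoid citing Lemma \ref{o}, the same conclusion would follow by localizing the minimal free resolution of $M$ at an arbitrary prime $\fp\neq\fm$, where it splits because $M_{\fp}=0$; then every $\Syz_{j}(M)_{\fp}$ is a free $R_{\fp}$-module, the hypothesis $\lambda(\Syz_{i+1}(M))<\infty$ forces $\Syz_{i+1}(M)_{\fp}=0$, and descending through the resulting split short exact sequences yields $\rank\Syz_{i-1}(M)_{\fp}=\beta_{i-1}(M)-\beta_{i}(M)\le 0$, whence $\Syz_{i-1}(M)_{\fp}=0$ for every $\fp\neq\fm$, i.e. $\lambda(\Syz_{i-1}(M))<\infty$. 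This, however, just reproves the portion of Lemma \ref{o} that is needed.
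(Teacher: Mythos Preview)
Your argument is correct. The case split at $i=1$ is handled properly, and for $i\ge 2$ the two applications of Lemma~\ref{o} (with $r=i$ and $r=i-2$) together with the identity
\[
\sum_{j=0}^{i-2}(-1)^{i-j}\beta_{j}(M)=\sum_{j=0}^{i}(-1)^{i-j}\beta_{j}(M)-\bigl(\beta_{i}(M)-\beta_{i-1}(M)\bigr)
\]
do exactly what is needed. The alternative localization argument you sketch is also fine and, as you note, simply unpacks the relevant implication of Lemma~\ref{o}.

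As for comparison: the paper does not give its own proof of this lemma; it is quoted verbatim from \cite[Proposition~5.5]{h} and used as a black box. Your derivation is therefore not a reproduction of anything in the present paper but rather an internal proof that bypasses the external citation entirely, obtaining the result as an immediate numerical consequence of the paper's own Lemma~\ref{o}. This is a genuine improvement in self-containment: once Lemma~\ref{o} is established, Lemma~\ref{desend} follows in two lines, so the dependence on \cite{h} for this particular statement could be removed.
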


\begin{corollary}\label{mis}
Let $0\neq M$ be a finite length module such that $\beta_i(M)\leq\beta_{i+1}(M)$ for all $i>0$. Then $\Supp(\Syz_{2i+1}(M))=\Spec(R)$ for all $i\geq0$.
 \end{corollary}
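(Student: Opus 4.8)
The plan is to convert the statement, via Lemma \ref{o}, into a numerical assertion about the Betti sequence $(\beta_j(M))_{j\geq 0}$ and then settle it by a one-line induction. As a preliminary step I would note that the hypothesis already forces $\pd_R(M)=\infty$: if $\pd_R(M)=p<\infty$, then $p=0$ would make $M$ free and nonzero, impossible for a finite length module over a ring of dimension $d>0$, while $p\geq 1$ would give $\beta_p(M)\neq 0=\beta_{p+1}(M)$, contradicting $\beta_p(M)\leq\beta_{p+1}(M)$. Hence all hypotheses of Lemma \ref{o} are in force for $M$.

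By the equivalence of (ii) and (iii) in Lemma \ref{o}, it suffices to prove that $S_{2i}>0$ for every $i\geq 0$, where $S_r:=\sum_{j=0}^{r}(-1)^{r-j}\beta_j(M)$. Peeling off the top summand of $S_r$ gives the elementary recursion $S_r+S_{r-1}=\beta_r(M)$ for $r\geq 1$; applying it at $r=2i$ and at $r=2i-1$ yields $S_{2i}=\bigl(\beta_{2i}(M)-\beta_{2i-1}(M)\bigr)+S_{2i-2}$ for $i\geq 1$. Since $2i-1>0$, the hypothesis gives $\beta_{2i-1}(M)\leq\beta_{2i}(M)$, so $S_{2i}\geq S_{2i-2}$. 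As $S_0=\beta_0(M)>0$ (because $M\neq 0$), an induction yields $S_{2i}\geq S_{2i-2}\geq\cdots\geq S_0>0$, and Lemma \ref{o} then delivers $\Supp(\Syz_{2i+1}(M))=\Spec(R)$.

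An equivalent route runs Lemma \ref{desend} in reverse: if $\lambda(\Syz_{2i+1}(M))<\infty$ for some $i\geq 1$, then applying Lemma \ref{desend} successively with index $2i,\,2i-2,\,\dots,\,2$ — each step legitimate because the required inequality $\beta_{2j}(M)\geq\beta_{2j-1}(M)$ involves only indices $2j-1>0$ — forces $\lambda(\Syz_{1}(M))<\infty$; but $\beta_0(M)>0$ is condition (ii) of Lemma \ref{o} for $r=0$, whence $\lambda(\Syz_1(M))=\infty$, a contradiction (the case $i=0$ is this same observation). I would probably present the first route, since it is self-contained.

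I do not expect a genuine obstacle: the entire content sits in Lemma \ref{o} and Lemma \ref{desend}. The one point demanding care is the parity restriction. The argument for $\Syz_{2i+1}(M)$ only ever compares consecutive Betti numbers with both indices positive, whereas handling even syzygies $\Syz_{2i}(M)$ would require comparing $\beta_0(M)$ with $\beta_1(M)$ — equivalently, it would need $S_1=\beta_1(M)-\beta_0(M)>0$, which the hypothesis does not guarantee. This is precisely why the conclusion is stated only for odd syzygies, and I would make that remark explicit.
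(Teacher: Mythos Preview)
Your argument is correct. Your Route~2 is exactly the paper's proof: the paper assumes $d>0$, notes $\pd(M)=\infty$, gets $\lambda(\Syz_1(M))=\infty$ from the short exact sequence $0\to\Syz_1(M)\to R^{\beta_0}\to M\to 0$, and then argues by contradiction via repeated application of Lemma~\ref{desend} to descend from $\lambda(\Syz_{2i+1}(M))<\infty$ to $\lambda(\Syz_1(M))<\infty$.

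Your preferred Route~1, however, is a genuinely different and cleaner path. By invoking the numerical criterion~(ii) of Lemma~\ref{o} directly and proving $S_{2i}\geq S_{2i-2}\geq\cdots\geq S_0=\beta_0(M)>0$ via the telescoping identity $S_{2i}-S_{2i-2}=\beta_{2i}(M)-\beta_{2i-1}(M)$, you bypass Lemma~\ref{desend} entirely. That lemma is imported from an external reference (\cite[Proposition~5.5]{h}) and itself rests on the same alternating-sum identity hidden inside $(\ast)$ of Lemma~\ref{o}, so your route is strictly more self-contained. The paper's approach has the mild advantage of exercising Lemma~\ref{desend}, which is reused later in Section~5; your approach has the advantage of making transparent exactly why the parity restriction appears and why no comparison involving $\beta_0$ is ever needed---a point you rightly flag at the end.
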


\begin{proof}
We may assume that $d>0$.
First note that $\pd(M)=\infty$, because $\beta_i(M)\leq\beta_{i+1}(M)$ for all $i>0$. Clearly, $\lambda(\Syz_1(M))=\infty$. This follows by looking at the following short exact sequence $0\to\Syz_1(M)\to R^{\beta_0}\to M\to 0$. Due to  Lemma \ref{o} $\Supp(\Syz_{1}(M))=\Spec(R)$.
Suppose  on the contrary that  $\Supp(\Syz_{2i+1}(M))\neq\Spec(R)$ for some $i>0$. By revisiting Lemma \ref{o}, we see that $\lambda(\Syz_{2i+1}(M))<\infty$. We apply this along with  Lemma \ref{desend} to observe that
$\lambda(\Syz_{2i-1}(M))<\infty$. If $2i-1\neq 1$ we can repeat the argument to observe that $\lambda(\Syz_{1}(M))<\infty$, a contradiction.
\end{proof}

Let $d(M)$ be the smallest integer $\ell$ such that $\dim(\Syz_i(M))$ is constant for all $i>\ell$.
Let $\mathcal{C}$ be a class of finitely generated modules. Suppose $d(M)$ is finite for all $M\in \mathcal{C}$.
Is $\sup\{d(M):M\in \mathcal{C}\}<\infty$?
The classes that we are interested on it are the class of finitely generated modules, the class of finite length modules and the class
of modules with fixed  numerical invariants.

\section{Dealing with reduced rings}

In the  Cohen-Macaulay case  and for all $i>\dim R$  the following fact
is in \cite{b}.

\begin{fact}(Okiyama)\label{oki}
Let $R$ be a ring such that $\Ass(R)=\assh(R)$ (e.g. $R$ is Cohen-Macaulay or $R$ is a domain) and $\pd_R(M) = \infty$. Then $\dim(\Syz_i(M))=\dim R$ for all $i>0$.
 \end{fact}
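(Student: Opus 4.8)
The plan is to reduce the statement to the situation already handled by Lemma \ref{o}, by localizing at a minimal prime of maximal coheight. Fix $i>0$ and write $S=\Syz_i(M)$. Since $\pd_R(M)=\infty$, standard rigidity of minimal free resolutions gives $S\neq 0$; in fact no syzygy of $M$ can become free, for otherwise the resolution would terminate. The key point to extract is that $S$ has no nonzero free summand locally at most primes, and more precisely that $\Supp(S)$ must meet $\assh(R)$.

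First I would show that $\min(\Supp(S))\cap\assh(R)\neq\emptyset$ is impossible to avoid, arguing by contradiction. Suppose $\Supp(S)$ misses every $\fp\in\assh(R)$; equivalently, for each such $\fp$ we have $S_\fp=0$. Under the hypothesis $\Ass(R)=\assh(R)$, the associated primes of $R$ are exactly the maximal-coheight minimal primes, so the non-minimal-dimension locus is ``small'' in the sense that $\bigcap_{\fp\in\assh(R)}\fp$ consists of zerodivisors on... more usefully: localize the minimal free resolution $\mathbf{F}_\bullet$ of $M$ at a prime $\fp\in\assh(R)$. If $\fp\notin\Supp(M)$ (which certainly happens when $M$ has finite length, but in general one must work at an arbitrary $\fp\in\assh(R)$) the localized complex splits and $S_\fp$ is free of rank $\sum_{j<i}(-1)^{i-1-j}\beta_j(M)$; hence this alternating sum is $\geq 0$ and equals $0$ exactly when $S_\fp=0$. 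The obstacle here is that $M$ need not have finite length, so one cannot assume $M_\fp=0$ for $\fp\neq\fm$; this is precisely why I expect the heart of the argument to be a more careful analysis than the finite-length case of Lemma \ref{o}, presumably invoking that over $R_\fp$ with $\fp\in\Ass(R)$ the ring has depth zero, so $\fp R_\fp\in\Ass(R_\fp)$, and then using the depth formula / Auslander--Buchsbaum together with the fact that $\pd_{R_\fp}(M_\fp)=\infty$ would force $S_\fp$ to be non-free, contradicting that over a depth-zero local ring a high enough syzygy detects infinite projective dimension.

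Concretely, the chain of implications I would run is: over any $\fp\in\Ass(R)$, the local ring $R_\fp$ has depth $0$; if $M_\fp$ had finite projective dimension over $R_\fp$ then by Auslander--Buchsbaum $\pd_{R_\fp}(M_\fp)=\depth R_\fp-\depth M_\fp=0-\depth M_\fp\leq 0$, so $M_\fp$ is free and all its higher syzygies vanish, giving $S_\fp=0$. Conversely if $M_\fp$ has infinite projective dimension then $S_\fp\neq 0$ for every $i$, and moreover $S_\fp$ cannot be free (a free syzygy truncates the resolution). Thus $\fp\in\Supp(S)$. So the real dichotomy is controlled by $\Supp_{\assh}(M):=\{\fp\in\assh(R):\pd_{R_\fp}(M_\fp)=\infty\}$, and I would show this set is nonempty: if $\pd_{R_\fp}(M_\fp)<\infty$ for all $\fp\in\assh(R)=\Ass(R)$, then $\pd_R(M)<\infty$ as well, because the projective dimension of a finitely generated module is governed by its behavior at associated primes (e.g. via $\Ext$: if $\Ext^{n+1}_R(M,-)=0$ it suffices to test against modules supported at associated primes, or simply use that $\pd_R(M)=\sup_\fp\pd_{R_\fp}(M_\fp)$ over $\fp\in\Supp(R)$ together with Auslander--Buchsbaum to push the sup onto $\Ass(R)$). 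This contradicts $\pd_R(M)=\infty$.

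Having secured a prime $\fp\in\assh(R)$ with $\fp\in\Supp(S)$, I would conclude: $\dim R\geq\dim(R/\fp)=\dim(R/\fp)$... and $\dim S\geq\dim(R/\fp)=\dim R$ since $\fp\in\assh(R)$, while $\dim S\leq\dim R$ always; hence $\dim(\Syz_i(M))=\dim R$ for all $i>0$, which is the assertion. The only genuinely delicate point, and the one I would write out with care, is the reduction ``$\pd_R(M)=\infty\Rightarrow\pd_{R_\fp}(M_\fp)=\infty$ for some $\fp\in\Ass(R)$'': the clean way is Auslander--Buchsbaum applied at a prime $\fq\in\Supp(M)$ realizing $\pd_R(M)=\pd_{R_\fq}(M_\fq)$ (which exists since $\pd$ is computed as a sup over $\Supp$), noting $\pd_{R_\fq}(M_\fq)=\depth R_\fq-\depth M_\fq$; if this is infinite we are not in the finite-pd case, so instead one argues that $\pd_R(M)<\infty$ would follow from finiteness everywhere on $\Ass(R)$ via the Auslander--Buchsbaum equality rearranged as $\depth R=\pd_R(M)+\depth M$ combined with the characterization of $\depth$ through associated primes. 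This last bookkeeping is where I expect to spend the most effort; everything else is the by-now-familiar localization-and-split argument behind Lemma \ref{o}.
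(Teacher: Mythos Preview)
Your approach has a genuine gap at the step you yourself flag as ``the only genuinely delicate point'': the implication
\[
\bigl(\pd_{R_\fp}(M_\fp)<\infty \text{ for all } \fp\in\Ass(R)\bigr)\ \Longrightarrow\ \pd_R(M)<\infty
\]
is false. Take $R=k[[x,y]]/(x^2)$. Then $\Ass(R)=\{(x)\}=\assh(R)$, so the hypothesis of the Fact is satisfied. For $M=R/\fm$ one has $M_{(x)}=0$ (since $y$ becomes a unit), hence $\pd_{R_{(x)}}(M_{(x)})$ is trivially finite, yet $\pd_R(M)=\infty$ because $R$ is not regular. Projective dimension over a local ring is \emph{not} governed by the associated primes of $R$; the formula $\pd_R(M)=\sup_\fp \pd_{R_\fp}(M_\fp)$ ranges over all of $\Spec(R)$ and for a local ring the supremum is attained at $\fm$, not at any $\fp\in\Ass(R)$. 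Your attempted reduction via Auslander--Buchsbaum cannot be pushed onto $\Ass(R)$ for the same reason.

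The paper's proof avoids all of this with a one-line observation you overlooked: since $\Syz_i(M)\hookrightarrow R^{\beta_{i-1}}$, one has $\Ass(\Syz_i(M))\subset\Ass(R^{\beta_{i-1}})=\Ass(R)$. As $\Syz_i(M)\neq 0$ (because $\pd_R(M)=\infty$), pick any $\fp\in\Ass(\Syz_i(M))$; then $\fp\in\Ass(R)=\assh(R)$, so $\dim R/\fp=\dim R$, and since $\fp\in\Supp(\Syz_i(M))$ we get $\dim(\Syz_i(M))\geq\dim R/\fp=\dim R$. No localization, no Auslander--Buchsbaum, no reduction to Lemma~\ref{o}. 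The moral is that the hypothesis $\Ass(R)=\assh(R)$ is tailor-made for exactly this associated-prime inclusion; trying to route through projective dimension at localizations discards the structure that makes the statement easy.
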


\begin{proof}
By looking at the following exact sequence $0\to \Syz_i(M)\to R^{\beta_{i-1}}\to \Syz_{i-1}(M) \to 0$ we observe  that $\Ass(\Syz_i(M))\subset\Ass(R)$. Since $\Syz_i(M)\neq 0$,  $\Ass(\Syz_i(M))\neq\emptyset$.  Let $\fp\in \Ass(\Syz_i(M))$. Then  $\fp\in \Ass(R)=\assh(R)$. By definition, $\dim R/\fp=\dim R$. So, $\dim(\Syz_i(M))=\dim R$ as claimed.
\end{proof}

\begin{corollary}\label{cm}
Let $R$ be a ring such that $\Ass(R)=\assh(R)$, $\lambda(M)<\infty$ and $\pd_R(M) = \infty$. Then $\Supp(\Syz_i(M))=\Spec( R)$ for all $i>0$.
 \end{corollary}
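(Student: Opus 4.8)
The plan is to deduce this directly from Fact~\ref{oki} together with Lemma~\ref{o}, with essentially no new work. First I would dispose of the degenerate case $d=\dim R=0$: then $\Spec(R)=\{\fm\}$, and since $\pd_R(M)=\infty$ forces $\Syz_i(M)\neq 0$ for every $i>0$, we trivially get $\Supp(\Syz_i(M))=\{\fm\}=\Spec(R)$. Hence from now on I may assume $d>0$, which is what Lemma~\ref{o} needs.

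Next I would apply Fact~\ref{oki}. Its hypotheses are precisely $\Ass(R)=\assh(R)$ and $\pd_R(M)=\infty$, both of which are in force, so we obtain $\dim(\Syz_i(M))=\dim R$ for all $i>0$. (Recall the mechanism behind it: the short exact sequence $0\lo\Syz_i(M)\lo R^{\beta_{i-1}}\lo\Syz_{i-1}(M)\lo 0$ yields $\Ass(\Syz_i(M))\subseteq\Ass(R)=\assh(R)$, and since $\Syz_i(M)\neq 0$ it has an associated prime $\fp$, which then satisfies $\dim R/\fp=\dim R$, forcing $\dim\Syz_i(M)=\dim R$.)

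Finally I would invoke Lemma~\ref{o} with $r=i-1\geq 0$. Here $M$ is of finite length and of infinite projective dimension and $d>0$, so all the hypotheses of that lemma hold, and the implication (iv)$\Rightarrow$(iii) turns the equality $\dim(\Syz_i(M))=\dim R$ from the previous step into $\Supp(\Syz_i(M))=\Spec(R)$. Running this for each $i>0$ completes the proof.

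As for the main obstacle: there really isn't one — the corollary is a straightforward concatenation of the two cited results. The only points deserving a word of care are the degenerate case $d=0$, where Lemma~\ref{o} is not applicable and one argues by hand, and the trivial remark that $\Syz_i(M)$ is nonzero precisely because $\pd_R(M)=\infty$, which is used both in the $d=0$ case and implicitly inside Fact~\ref{oki}.
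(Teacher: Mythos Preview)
Your proposal is correct and follows essentially the same route as the paper: apply Fact~\ref{oki} to get $\dim(\Syz_i(M))=\dim R$, then feed this into Lemma~\ref{o} to conclude $\Supp(\Syz_i(M))=\Spec(R)$. Your separate treatment of the case $d=0$ is superfluous here, since the paper's standing assumption is $d>0$, but it does no harm.
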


\begin{proof} By Fact \ref{oki} $\dim(\Syz_i(M))=\dim R$ for all $i>0$.
It is enough to apply Lemma \ref{o}.
\end{proof}

 The finite length assumption in Corollary \ref{mis}, Corollary \ref{1,3}, Lemma \ref{o}, and  Corollary \ref{cm} is important:

 \begin{example}\label{1.6}
  We look at the Cohen-Macaulay ring
$R:=k[[X,Y]]/(XY)$ and the infinite length module $M:=R/xR$. The following holds: \begin{enumerate}
\item[i)]  One has $\Ass(R)=\assh(R)$,
\item[ii)] $\Supp(\Syz_i(M))\neq \Spec(R)$ for all $i>0$,
\item[iii)] $\dim(\Syz_{i}(M))=\dim( R)$ for all $i>0$.
\end{enumerate}
\end{example}

\begin{proof}
Clearly, $\Ass(R)=\{(x),(y)\}=\min(R)=\assh(R)$. Also, $\Supp(M)=\{(x),(x,y)\}$ and that  $\dim M =1$. This implies that $\lambda(M)=\infty$.
The minimal free resolution of $M$ is given by  $\ldots\stackrel{x}\lo R\stackrel{y}\lo R\stackrel{x}\lo R\to M \to 0.$ Then \begin{equation*}
\Syz_{i}(M)= \left\{
\begin{array}{rl}
R/xR & \  \   \   \   \   \ \  \   \   \   \   \ \text{if } i\in2\mathbb{N}_0\\
R/yR & \  \   \   \   \   \ \  \   \   \   \   \ \text{if } i\in2\mathbb{N}_0+1
\end{array} \right.
\end{equation*} So, \begin{equation*}
\Supp(\Syz_{i}(M))= \left\{
\begin{array}{rl}
\V(xR) & \  \   \   \   \   \ \  \   \   \   \   \ \text{if } i\in2\mathbb{N}_0\\
\V(yR) & \  \   \   \   \   \ \  \   \   \   \   \ \text{if } i\in2\mathbb{N}_0+1
\end{array} \right.
\end{equation*} This shows that $\Supp(\Syz_i(M))\neq \Spec(R)$ for all $i>0$ and that  $\dim(\Syz_{i}(M))=\dim( R)$ for all $i>0$.
\end{proof}

\begin{lemma}\label{depth}
Let $R$ be of positive depth. Then Question 1.1 is   true. In fact,  if $M$ is such that $\pd_R(M) = \infty$ and $\lambda(M) <\infty$, then $\Supp(\Syz_{i}(M))=\Spec( R)$ for all $i > 0$.
\end{lemma}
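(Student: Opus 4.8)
The plan is to reduce everything to Lemma \ref{o} by locating a single non-maximal prime in the support of each syzygy. First I would record the trivial but crucial consequences of the hypothesis: $\depth R>0$ means $\fm$ contains a nonzerodivisor, so $\fm\notin\Ass(R)$, and also $d=\dim R\geq\depth R>0$. In particular $M$ is a finite length module of infinite projective dimension over a ring of positive dimension, so the standing hypotheses of Lemma \ref{o} are satisfied. Fix $i>0$; since $\pd_R(M)=\infty$ the minimal free resolution never terminates, hence $\Syz_i(M)\neq 0$ and $\beta_{i-1}(M)\geq 1$.

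Next I would apply the associated-primes functor to the short exact sequence
$$0\lo\Syz_i(M)\lo R^{\beta_{i-1}}\lo\Syz_{i-1}(M)\lo 0,$$
obtaining $\emptyset\neq\Ass(\Syz_i(M))\subseteq\Ass\bigl(R^{\beta_{i-1}}\bigr)=\Ass(R)$, where nonemptiness uses $\Syz_i(M)\neq 0$. Choosing $\fp\in\Ass(\Syz_i(M))$, the inclusion into $\Ass(R)$ together with $\fm\notin\Ass(R)$ forces $\fp\neq\fm$, while $\fp\in\Ass(\Syz_i(M))\subseteq\Supp(\Syz_i(M))$. Thus $\Supp(\Syz_i(M))$ contains a prime different from $\fm$, and consequently $\lambda(\Syz_i(M))=\infty$.

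Finally I would invoke Lemma \ref{o} with $r:=i-1\geq 0$: the equivalence of its conditions (i) and (iii) turns $\lambda(\Syz_i(M))=\infty$ into $\Supp(\Syz_i(M))=\Spec(R)$, which is the assertion. (Since $d>0$ this in particular forces $\lambda(\Syz_i(M))=\infty$ for all $i>0$, so Question 1.1 holds for $R$ with the sharper range ``all $i>0$'' in place of ``$i>d+1$''.)

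I do not expect a genuine obstacle. The one non-formal ingredient is the passage from ``one non-maximal prime lies in the support'' to ``the support is all of $\Spec(R)$'', and that is exactly the mechanism packaged in Lemma \ref{o}: for a finite length module the localized minimal resolution at any $\fp\neq\fm$ splits, so $\Syz_i(M)_\fp$ is free of rank $\sum_{j=0}^{i-1}(-1)^{i-1-j}\beta_j(M)$, a number independent of $\fp$; positivity at one such $\fp$ therefore propagates to every $\fp\neq\fm$, and $\fm$ joins the support because $\Syz_i(M)\neq 0$.
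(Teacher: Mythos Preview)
Your argument is correct and follows the same overall architecture as the paper: use the short exact sequence $0\to\Syz_i(M)\to R^{\beta_{i-1}}\to\Syz_{i-1}(M)\to 0$ to show each syzygy has infinite length, then invoke Lemma~\ref{o}. The only difference is in how the ``infinite length'' step is extracted. The paper applies the depth inequality for short exact sequences (via the long exact sequence of $\Ext^{\ast}_R(R/\fm,-)$) to obtain $\depth(\Syz_i(M))>0$, hence $\Syz_i(M)$ cannot be of finite length. You instead use the more elementary fact that $\Ass$ of a submodule is contained in $\Ass$ of the ambient module, together with $\fm\notin\Ass(R)$, to exhibit a non-maximal prime in $\Supp(\Syz_i(M))$. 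These are two phrasings of the same phenomenon ($\depth N>0\Leftrightarrow\fm\notin\Ass(N)$ for finitely generated $N$), but your route avoids the Ext long exact sequence entirely and is marginally slicker: it uses only the injection $\Syz_i(M)\hookrightarrow R^{\beta_{i-1}}$ rather than the full three-term sequence.
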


\begin{proof}
Recall that $\depth$ of a finitely generated module $L$ is defined by $\inf\{j\geq 0:\Ext^j_R(R/\fm,L)\neq 0\}$.
Let $i>0$ and look at the exact sequence  $0\to \Syz_i(M)\to R^{\beta_{i-1}}\to \Syz_{i-1}(M)\to 0. $ Note that $\depth (R^{\beta_{i-1}})>0$.
Apply the long exact sequence of Ext-modules $\Ext^{\ast}_R(R/\fm,-)$
  to deduce  that $$\depth (\Syz_i(M))\geq\inf \{\depth (R^{\beta_{i-1}}),\depth(\Syz_{i-1}(M))+1\}>0.$$
Since $\depth$  of any nonzero finite length module is zero, we get that $\lambda(\Syz_i(M))=\infty$. We conclude from Lemma \ref{o}  that
$\Supp(\Syz_{i}(M))=\Spec( R)$ for all $i > 0$.
\end{proof}

\begin{corollary}\label{Cdepth}
Let $R$ be equi-dimensional and of positive depth. If $M$ is locally free over punctured spectrum, then $\dim(\Syz_i(M))$ is constant for all $i\gg 0$.
\end{corollary}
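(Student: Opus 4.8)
The plan is to combine the dichotomy of Lemma \ref{d} with the depth estimate contained in the proof of Lemma \ref{depth}. Lemma \ref{d} says that under the present hypotheses every $\Syz_i(M)$ is \emph{either} of maximal dimension $\dim R$ \emph{or} of finite length, so it suffices to rule out the finite-length alternative for each $i>0$ with $\Syz_i(M)\neq 0$; then $\dim\Syz_i(M)=\dim R$ for all such $i$, which is the asserted constancy. (If $\pd_R M<\infty$ the syzygies eventually vanish, so the statement is to be read over the range where $\Syz_i(M)\neq 0$; in effect the case of interest is $\pd_R M=\infty$, where every positive syzygy is nonzero.)

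First I would show $\depth\Syz_i(M)>0$ for all $i>0$ with $\Syz_i(M)\neq 0$, by induction on $i$ through the exact sequences
\[
0\lo\Syz_i(M)\lo R^{\beta_{i-1}}\lo\Syz_{i-1}(M)\lo 0 .
\]
Since $R$ has positive depth, $\depth R^{\beta_{i-1}}=\depth R>0$; the long exact sequence of $\Ext^{\ast}_R(R/\fm,-)$ applied to this short exact sequence then gives
\[
\depth\Syz_i(M)\ \geq\ \inf\{\,\depth R,\ \depth\Syz_{i-1}(M)+1\,\},
\]
which is $\inf\{\depth R,\depth M+1\}\geq 1$ for $i=1$ and propagates immediately. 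This is exactly the computation in the proof of Lemma \ref{depth}; I would re-derive it rather than quote the lemma, since it uses only $\depth R>0$ and not the extra hypotheses $\lambda(M)<\infty$, $\pd_R M=\infty$ recorded there, which we are not assuming here.

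Finally, a nonzero finitely generated module of positive depth satisfies $\dim\geq\depth>0$, hence is not of finite length; so $\lambda(\Syz_i(M))=\infty$ for every $i>0$ with $\Syz_i(M)\neq 0$. Plugging this into the dichotomy of Lemma \ref{d} forces $\dim\Syz_i(M)=\dim R$, so $\dim\Syz_i(M)$ is the constant $\dim R$ for all positive $i$ (with nonzero syzygy), as claimed. There is no serious obstacle here: the argument is a short assembly of Lemmas \ref{d} and \ref{depth}, and the only thing demanding attention is the bookkeeping around possibly vanishing syzygies and extracting the depth inequality in exactly the generality needed.
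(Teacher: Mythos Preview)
Your proof is correct and follows essentially the same route as the paper: reduce to $\pd_R(M)=\infty$, use the depth inequality from the proof of Lemma~\ref{depth} to get $\lambda(\Syz_i(M))=\infty$, and then invoke the dichotomy of Lemma~\ref{d}. You are right to re-derive the depth estimate rather than quote Lemma~\ref{depth} verbatim, since that lemma is stated under the stronger hypothesis $\lambda(M)<\infty$; the paper simply cites it, relying on the reader to notice that the relevant part of its proof only needs $\depth R>0$.
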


\begin{proof}Suppose first that $\pd(M)$ is finite.
Then $ \Syz_i(M)=0$  for all $i\gg 0$. Then, without loss of the generality may assume that $\pd(M)=\infty$.
Recall that $$\depth (\Syz_i(M))\geq\inf \{\depth (R^{\beta_{i-1}}),\depth(\Syz_{i-1}(M))+1\}>0,$$i.e., $\lambda(\Syz_i(M))=\infty$.
We apply the equi-dimensional and the locally free assumption along  with  Lemma \ref{d}
to see that $\dim(\Syz_i(M)) = \dim R$ for all $i>0$.
\end{proof}

 \begin{corollary}\label{red1}
Let $R$ be a  reduced local ring  and $M$ a finite length module of infinite projective dimension. Then $\Supp(\Syz_{i}(M))=\Spec (R)$ for all $i> 0$.
\end{corollary}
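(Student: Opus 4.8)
The plan is to reduce the statement to Lemma~\ref{depth}. Note first that one cannot simply invoke Fact~\ref{oki} or Corollary~\ref{cm} here: a reduced local ring need not satisfy $\Ass(R) = \assh(R)$, because its minimal primes may have different dimensions, and it need not be equidimensional. So the route will go through depth instead.

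The first step is to observe that $\dim R > 0$. A zero-dimensional reduced local ring is artinian and reduced, hence a field, and over a field every finitely generated module is free; this would force $\pd_R(M) = 0$, contradicting $\pd_R(M) = \infty$. (Only the hypothesis $\pd_R(M) = \infty$ is used at this point, not finite length.)

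The second — and essentially the only substantive — step is to show $\depth R > 0$. Since $R$ is reduced and noetherian, the zero ideal is an intersection of the minimal primes and has no embedded component, so $\Ass(R) = \Min(R)$. As $\dim R > 0$, the maximal ideal $\fm$ strictly contains some prime, hence $\fm \notin \Min(R) = \Ass(R)$. Recalling, as in the proof of Lemma~\ref{depth}, that $\Ext^0_R(R/\fm, R) \cong (0 :_R \fm)$, the condition $\fm \notin \Ass(R)$ says exactly that $(0 :_R \fm) = 0$, i.e. $\depth R > 0$.

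With $\depth R > 0$ established, Lemma~\ref{depth} applies verbatim and yields $\Supp(\Syz_i(M)) = \Spec(R)$ for all $i > 0$, which is precisely the assertion of Corollary~\ref{red1}. I do not anticipate a genuine obstacle: the corollary is a specialization of Lemma~\ref{depth}, the only additional input being the elementary fact that a reduced local ring of positive dimension has positive depth. If anything, the one point to handle carefully is the equivalence $\fm \notin \Ass(R) \iff \depth R > 0$ via the $\Ext^0$ computation, but this is standard.
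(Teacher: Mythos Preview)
Your proposal is correct and follows essentially the same route as the paper: reduce to Lemma~\ref{depth} by showing that a reduced local ring of positive dimension has positive depth. The paper phrases this step via Serre's condition $(S_1)$ applied at $\fm$, while you argue directly that $\Ass(R)=\Min(R)$ for reduced rings and hence $\fm\notin\Ass(R)$; these are the same fact in different clothing.
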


\begin{proof}
We may assume that $\dim R>0$.
Reduced rings satisfy in the Serre's condition $\Se_1$. One may read this as follows: $\depth(R_{\fp})\geq\min\{1,\Ht(\fp)\}$ for all $\fp\in \Spec(R)$. We apply this for
the maximal ideal to observe that $\depth(R)>0$. Now Lemma \ref{depth} yields the claim.
\end{proof}

Revisiting Example \ref{1.6}, we observe that the finite length assumption in Corollary \ref{red1} is  needed.

\section{Looking through  Buchsbaum  glasses}

By $\HH^0_{\fm}(R)$, we mean the elements of $R$ that  are annihilated by some  power  of $\fm$.

\begin{lemma}\label{v}(Vanishing result)
 Let $M$ be  locally free over punctured spectrum that
$\lambda(\Syz_{i+1}(M))<\infty$
for some fixed $i > 0$. Then
$\Tor^R_i(M,R/\COH^0_{\fm}(R))= 0.$
\end{lemma}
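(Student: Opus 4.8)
The plan is to set $T := \COH^0_{\fm}(R)$, so that $T$ is a finite-length submodule of $R$ with $R/T$ of positive depth, and to exploit the long exact sequence in $\Tor$ coming from $0 \to T \to R \to R/T \to 0$ tensored with $M$. Since $T$ has finite length, $T_{\fp} = 0$ for every prime $\fp \neq \fm$, and likewise $\Tor^R_j(M,T)$ is a finite-length module for every $j$; the same holds for $\Tor^R_j(M, R/T)$ for $j \geq 1$ because those modules are supported in $V(\Ann T) \cup (\text{non-free locus of }M) \subseteq \{\fm\}$ — here I would use that $M$ is locally free on the punctured spectrum, so $\Tor^R_j(M,-)$ localizes to zero away from $\fm$ for $j \geq 1$. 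The strategy is then to compare these finite-length $\Tor$'s against $\Syz_{i+1}(M)$, whose finiteness of length is the hypothesis, and to force $\Tor^R_i(M, R/T) = 0$, which in turn will pin down $\Tor^R_i(M,T)$.

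The key steps, in order, are as follows. First I would write down, from the truncated free resolution of $M$, the standard identification relating $\Tor^R_i(M, N)$ to the syzygy: tensoring the minimal free resolution $\mathbf{F}$ of $M$ with $N$ and using $\Syz_{i+1}(M) = \im(f_{i+1})$, one gets that $\Tor^R_i(M,N)$ fits into the homology of $N \otimes \mathbf{F}$ at spot $i$; more usefully, from $0 \to \Syz_{i+1}(M) \to F_i \to \Syz_i(M) \to 0$ we obtain $\Tor^R_1(\Syz_i(M), N) \cong \Tor^R_{i+1}(M,N)$ and an exact sequence $0 \to \Tor^R_{i+1}(M,N) \to \Syz_{i+1}(M)\otimes N \to F_i \otimes N$, so $\Tor^R_{i+1}(M,N)$ embeds in $\Syz_{i+1}(M) \otimes N$ while $\Tor^R_i(M,N) \cong \Tor^R_1(\Syz_{i+1}(M), N)$ (shifting index by one, using $i>0$). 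Second, I apply the long exact sequence of $\Tor^R_*(M,-)$ to $0 \to T \to R \to R/T \to 0$: since $R$ is free, $\Tor^R_j(M,R) = 0$ for $j\geq 1$, giving isomorphisms $\Tor^R_{j+1}(M, R/T) \cong \Tor^R_j(M, T)$ for all $j \geq 1$, and the four-term tail $0 \to \Tor^R_1(M,R/T) \to M \otimes T \to M \to M\otimes R/T \to 0$. Third, I localize at a prime $\fp \neq \fm$: because $T_\fp = 0$ we get $\Tor^R_j(M,T)_\fp = 0$, hence all these $\Tor$'s are finite length; this is where $\dim R$ could be used but actually is not needed. Fourth — the crux — I would argue that $\Syz_{i+1}(M)\otimes_R T$ has finite length (clear, as $T$ is finite length) and, combining with $\lambda(\Syz_{i+1}(M)) < \infty$ and a length count in the exact sequences above, deduce the vanishing of $\Tor^R_i(M, R/\COH^0_\fm(R))$; concretely one feeds $N = R/T$ into the shift $\Tor^R_i(M,R/T) \cong \Tor^R_1(\Syz_{i+1}(M), R/T)$ and then uses that $\Syz_{i+1}(M)$ has finite length together with $\depth(R/T) > 0$ to kill this $\Tor$ via a depth/associated-prime argument (a finite-length module tensored appropriately against something of positive depth, or rather: $\Tor^R_1$ of a finite length module with $R/T$ sits inside a free module over the finite-length module, and the relevant annihilator considerations force it to vanish).

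The main obstacle I anticipate is precisely the fourth step: turning ``$\Syz_{i+1}(M)$ has finite length'' plus ``$R/\COH^0_\fm(R)$ has positive depth'' into the exact vanishing $\Tor^R_i(M, R/\COH^0_\fm(R)) = 0$ rather than merely ``finite length''. The cleanest route I would try is to note $\Syz_{i+1}(M)$ finite length means it is annihilated by $\fm^n$ for some $n$, so $\Tor^R_i(M, R/T)$ is an $R/\fm^n$-module; on the other hand $\Tor^R_i(M,R/T) \cong \Tor^R_i(M,R/T)$ and via the resolution this is a subquotient controlled by $R/T$, over which $\fm$ contains a nonzerodivisor — and a module that is simultaneously $\fm^n$-torsion and ``embeds in something $R/T$-flat-like'' must be zero. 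If that does not close cleanly I would instead induct: use Lemma \ref{desend}-style descent, or observe $\Tor^R_i(M,R/T)$ finite length forces $\Tor^R_i(M,R/T)_\fm$-considerations via the spectral/long-exact sequence $\Tor^R_{i+1}(M,R/T)\cong \Tor^R_i(M,T)$ and run down the index to $\Tor^R_1(M, R/T) \hookrightarrow M \otimes T$, comparing ranks. I would present whichever of these gives the shortest valid argument.
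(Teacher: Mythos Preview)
Your plan sets up the right objects --- $T=\COH^0_{\fm}(R)$, the sequence $0\to T\to R\to R/T\to 0$, and dimension shifting --- but the decisive step (your step~4) contains both an index error and a genuine gap.

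First, the dimension shift you invoke is off by two: from $0\to\Syz_{j+1}\to F_j\to\Syz_j\to 0$ one gets $\Tor^R_1(\Syz_j(M),N)\cong\Tor^R_{j+1}(M,N)$, so iterating gives $\Tor^R_i(M,N)\cong\Tor^R_1(\Syz_{i-1}(M),N)$, \emph{not} $\Tor^R_1(\Syz_{i+1}(M),N)$. In particular you cannot transfer the finite-length hypothesis on $\Syz_{i+1}(M)$ to the relevant $\Tor$ along the route you describe. Second, even granting a correct index, the assertion that ``$\Tor^R_1$ of a finite-length module with $R/T$ vanishes because $\depth(R/T)>0$'' is simply false in general (take $R=k[x]/(x^2)$, $T=(x)$, $L=k$: then $\Tor^R_1(L,R/T)=\Tor^R_1(k,k)\neq 0$). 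So the ``depth/associated-prime'' heuristic you sketch cannot close the argument by itself.

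What is missing is the one concrete observation that makes the proof work at the chain level (this is essentially the argument the paper imports from \cite[Lemma~5.2]{h}): since $\Syz_{i+1}(M)\subseteq F_i$ has finite length, it is annihilated by some $\fm^n$, hence
\[
\Syz_{i+1}(M)\ \subseteq\ (0:_{F_i}\fm^n)\ =\ (0:_R\fm^n)^{\beta_i}\ \subseteq\ T\cdot F_i.
\]
Now compute $\Tor^R_i(M,R/T)$ as $H_i(\mathbf{F}\otimes R/T)$. If $\bar{x}\in F_i/TF_i$ is a cycle, choose a lift $x\in F_i$ with $f_i(x)\in TF_{i-1}$; then $\fm^m f_i(x)=0$ for some $m$, so $\fm^m x\in\ker f_i=\Syz_{i+1}(M)\subseteq TF_i$, whence $\fm^{m+m'}x=0$ for suitable $m'$ and therefore $x\in TF_i$. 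Thus the cycles at spot $i$ are zero and $\Tor^R_i(M,R/T)=0$. This argument uses neither the long exact sequence for $0\to T\to R\to R/T\to 0$ nor any abstract $\Tor$-isomorphism; the crucial input is that $\Syz_{i+1}(M)$ is not merely of finite length, but is a finite-length \emph{submodule of a free module}, which forces it into $T\cdot F_i$. Your proposal never isolates this containment, and without it the vanishing cannot be deduced.
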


\begin{proof}
The proof in the case $M$ is of finite length is in  \cite[Lemma 5.2]{h}. Again,
such a proof works for locally free modules.
\end{proof}

\begin{lemma}\label{new}
Let $M$ be of finite length. Then $\lambda(\Syz_i(M))=\infty$  for all $1\leq i\leq d$ provided $\Syz_i(M)\neq0$. In fact $\Supp(\Syz_{i}(M))=\Spec(R)$.
\end{lemma}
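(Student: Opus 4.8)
The plan is to separate the harmless first syzygy from the rest and to hit the range $2\le i\le d$ with the New Intersection Theorem. First I would dispose of $i=1$ by hand: in the exact sequence $0\lo \Syz_1(M)\lo R^{\beta_0}\lo M\lo 0$, additivity of length gives $\lambda(\Syz_1(M))=\lambda(R^{\beta_0})-\lambda(M)=\infty$, since $\lambda(R^{\beta_0})=\infty$ (here $d\ge 1$, as otherwise the asserted range $1\le i\le d$ is empty) while $\lambda(M)<\infty$; in fact this already forces $\Syz_1(M)\neq 0$. One may also assume throughout that $\Syz_i(M)\neq 0$, as otherwise there is nothing to prove.

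For $2\le i\le d$, I would look at the truncated minimal free resolution of $M$ with $M$ itself removed,
$$\mathbf{F}:\qquad 0\lo R^{\beta_{i-1}}\stackrel{f_{i-1}}\lo R^{\beta_{i-2}}\lo\cdots\lo R^{\beta_1}\stackrel{f_1}\lo R^{\beta_0}\lo 0,$$
placed in homological degrees $i-1,\dots,0$. Inspecting the resolution, $H_0(\mathbf{F})\cong M$, $H_j(\mathbf{F})=0$ for $1\le j\le i-2$, and $H_{i-1}(\mathbf{F})=\ker(f_{i-1})=\Syz_i(M)$, so the total homology of $\mathbf{F}$ is $M\oplus\Syz_i(M)$, which is nonzero because $M\neq 0$. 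Now suppose, for a contradiction, that $\lambda(\Syz_i(M))<\infty$. Then $\mathbf{F}$ is a non-exact complex of finitely generated free $R$-modules of length $i-1$ all of whose homology has finite length, so the New Intersection Theorem forces $i-1\ge\dim R=d$, i.e. $i\ge d+1$, contradicting $i\le d$. Hence $\lambda(\Syz_i(M))=\infty$ for every $i$ with $1\le i\le d$ and $\Syz_i(M)\neq 0$.

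It remains to promote infinite length to full support. For any prime $\fp\neq\fm$ we have $M_\fp=0$ since $M$ has finite length, so localizing $0\lo\Syz_i(M)\lo R^{\beta_{i-1}}\lo\cdots\lo R^{\beta_0}\lo M\lo 0$ at $\fp$ produces an exact sequence of finite free $R_\fp$-modules that splits, whence $\Syz_i(M)_\fp$ is free of rank $n:=\sum_{j=0}^{i-1}(-1)^{i-1-j}\beta_j$, a number not depending on $\fp$. If $n=0$ then $\Syz_i(M)_\fp=0$ for all $\fp\neq\fm$, so $\Supp(\Syz_i(M))\subseteq\{\fm\}$ and $\lambda(\Syz_i(M))<\infty$, contradicting the previous paragraph; therefore $n\ge 1$ and $\fp\in\Supp(\Syz_i(M))$ for every $\fp\neq\fm$. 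Since $\Syz_i(M)\neq 0$ we also have $\fm\in\Supp(\Syz_i(M))$, and thus $\Supp(\Syz_i(M))=\Spec(R)$. (Once $\lambda(\Syz_i(M))=\infty$ is in hand, this last step is essentially the implication $i)\Rightarrow iii)$ of Lemma \ref{o}.)

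The only non-elementary ingredient — and the step I expect to be the crux — is the appeal to the New Intersection Theorem; everything else is bookkeeping with the minimal free resolution and with localization at a non-maximal prime. I would note that a naive induction on $i$ via the depth/dimension lemma applied to $0\lo\Syz_i(M)\lo R^{\beta_{i-1}}\lo\Syz_{i-1}(M)\lo 0$ does not close, precisely because such an induction never uses the bound $i\le d$; the New Intersection Theorem is exactly the device that turns "there is a free complex of length $d-1$ with finite-length homology" into a contradiction.
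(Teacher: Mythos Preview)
Your proof is correct and follows essentially the same line as the paper's: assume $\lambda(\Syz_i(M))<\infty$, look at the truncated free complex $0\to R^{\beta_{i-1}}\to\cdots\to R^{\beta_0}\to 0$ with finite-length homologies, and invoke the New Intersection Theorem to force $i-1\ge d$, then upgrade to full support via Lemma~\ref{o}. The only cosmetic differences are that you separate out $i=1$ explicitly (which is arguably cleaner, since for $i=1$ the complex degenerates to $0\to R^{\beta_0}\to 0$ and the homology is $R^{\beta_0}$ rather than $\Syz_1(M)$) and that you spell out the localization argument for $\Supp(\Syz_i(M))=\Spec(R)$ rather than simply citing Lemma~\ref{o}.
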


\begin{proof}
Suppose on the contradiction that $\lambda(\Syz_i(M))<\infty$ for some $1\leq i\leq d$. Among these, we look at the minimal one, and denote it again by $i$.
We use the \textit{new intersection theorem} \cite{int} along with the following complex of free modules
with finite length homologies $0 \to R^{\beta_{i-1}}\to \cdots\to  R^{\beta_{0}}\to  0$ to deduce that
 $i-1\geq \dim R$. This excluded by the assumption. Now, the proof of Lemma \ref{o} shows that
$\Supp(\Syz_{i}(M))=\Spec(R)$.
\end{proof}

We need to recall the following result:  Let $R$ be a noetherian ring  and $0\neq I$
 an ideal with a
finite free resolution. Then $I$ contains an $R$-regular element, see \cite[Corollary 1.4.7]{BH}.

\begin{corollary}\label{vc} Let $(R,\fm,k)$ be a $d$-dimensional ring  with $d>0$.
Then $\Syz_i(R/\fm)$ is of infinite length provided $\Syz_i(R/\fm)$ is nonzero  for some fixed $i > 0$. In fact,  $\Supp(\Syz_i(R/ \fm))=\Spec(R)$.
\end{corollary}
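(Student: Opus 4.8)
The plan is to separate the two ranges $1 \le i \le d$ and $i > d$, and in each range reduce to a situation already handled above. For $1 \le i \le d$: since $R/\fm$ has finite length, Lemma \ref{new} applies directly and gives $\Supp(\Syz_i(R/\fm)) = \Spec(R)$ whenever $\Syz_i(R/\fm) \neq 0$; in particular such a syzygy is of infinite length because $d > 0$. So the substantive case is $i > d$.

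For $i > d$ I would argue by contradiction. Suppose $\Syz_i(R/\fm) \neq 0$ but $\lambda(\Syz_i(R/\fm)) < \infty$ for some $i > d$; equivalently, by Lemma \ref{o} (applied with $M = R/\fm$ and $r = i-1$), suppose $\Supp(\Syz_i(R/\fm)) \neq \Spec(R)$. The first step is to descend: I want to show that if $\lambda(\Syz_i(R/\fm)) < \infty$ for some $i > 0$, then $\lambda(\Syz_{i-1}(R/\fm)) < \infty$ as well. The natural tool is Lemma \ref{desend}, which gives exactly this descent provided $\beta_i(R/\fm) \geq \beta_{i-1}(R/\fm)$. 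The key input I would invoke is that the Betti numbers of the residue field of a non-regular local ring are nondecreasing (indeed strictly increasing) — this is a classical fact about $\Tor^R_\bullet(k,k)$; if $R$ is regular the statement of the corollary is vacuous past $i = d$ since then $\Syz_i(k) = 0$ there, and the range $i \le d$ is covered by the first paragraph. Iterating the descent from the offending index $i > d$ down to $i = 1$ would then force $\lambda(\Syz_1(R/\fm)) < \infty$.

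The final step is to contradict $\lambda(\Syz_1(R/\fm)) < \infty$. We have the exact sequence $0 \lo \Syz_1(R/\fm) \lo R \lo R/\fm \lo 0$, so $\Syz_1(R/\fm) = \fm$. If $d > 1$, then $\lambda(\fm) = \infty$ is immediate since $\dim(\fm) = \dim R > 0$; when $d = 1$ one still has $\lambda(\fm) = \infty$ unless $\fm$ has finite length, but a finite-length ideal in a ring of positive depth is zero, and in any case $\dim(R/\fm) = 0 < 1 = \dim R$ forces $\dim \fm = 1$, so again $\lambda(\fm) = \infty$. Alternatively, and more cleanly, I would invoke the quoted result that an ideal $0 \neq I$ with a finite free resolution contains a regular element: if some syzygy $\Syz_i(R/\fm)$ past the descent had finite length we would eventually be saying $\fm$, hence via the descent all the way down, that $R/\fm$ itself has finite projective dimension — but then $R$ is regular, excluded. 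Either route closes the loop. Translating back through Lemma \ref{o} upgrades "infinite length" to $\Supp(\Syz_i(R/\fm)) = \Spec(R)$.

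The main obstacle is making the descent argument airtight: Lemma \ref{desend} requires the Betti inequality $\beta_i \ge \beta_{i-1}$ at every step, so I must be sure the Betti sequence of $k$ over a non-regular local ring is genuinely nondecreasing at all spots (not just eventually), and handle the base of the induction correctly. A secondary point to check is that $\Syz_i(R/\fm) \ne 0$ propagates downward along the descent so that Lemma \ref{o} and Lemma \ref{desend} stay applicable; this follows because $\pd_R(R/\fm) = \infty$ for $R$ non-regular, so no syzygy vanishes.
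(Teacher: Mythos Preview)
Your approach diverges from the paper's and carries a real gap. The paper does not invoke Lemma~\ref{new} or the descent Lemma~\ref{desend} here at all. Instead it argues directly via Lemma~\ref{v}: if $\lambda(\Syz_{i+1}(k))<\infty$ for some $i>0$, then $\Tor^R_i(k,R/\COH^0_{\fm}(R))=0$; because a single Tor vanishing against the residue field forces $\pd_R(R/\COH^0_{\fm}(R))<i$, the ideal $\COH^0_{\fm}(R)$ acquires a finite free resolution. After reducing via Lemma~\ref{depth} to $\COH^0_{\fm}(R)\neq 0$ (and noting $\COH^0_{\fm}(R)\neq R$ since $d>0$), the cited fact \cite[Corollary 1.4.7]{BH} produces a regular element inside $\COH^0_{\fm}(R)$---impossible, as every element there is $\fm$-torsion. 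The special feature $M=k$ is precisely what converts one Tor vanishing into a projective-dimension bound; no Betti monotonicity enters.

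Your route has two problems. First, Lemma~\ref{desend} descends from $\Syz_{i+1}$ to $\Syz_{i-1}$, so parity is preserved: starting from an even offending index you land at $\Syz_2$, and one further application only yields $\lambda(\Syz_0(k))=\lambda(k)<\infty$, which is no contradiction. For $d\geq 2$ this is harmless since Lemma~\ref{new} already covers $\Syz_2$, but for $d=1$ the even-index case is genuinely left open by your argument. Second, the input $\beta_i(k)\geq\beta_{i-1}(k)$ for all $i$ over a non-regular local ring is exactly the Tate/Gulliksen--Levin type statement the paper announces in the introduction it wishes to avoid; it is true but not elementary and not established within the paper. Your ``alternative'' sketch does not repair this: a finite-length syzygy of $k$ does not by itself force $\pd_R(k)<\infty$, and the finite-free-resolution criterion should be applied to $\COH^0_{\fm}(R)$ (as the paper does), not to $\fm$.
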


\begin{proof}In view of $0\to \Syz_1(R/\fm)\to R^{\beta_0}\to R/\fm\to 0$ we observe that $\Syz_1(R/\fm)$ is of infinite length. Then we may assume that $i>1$.  Suppose on the contradiction that $\Syz_{i+1}(R/\fm)$ is of finite length for some $i>0$. Then by the vanishing result
we have $\Tor^R_i(k,R/\COH^0_{\fm}(R))= 0$. Keep in mind that $\pd(R/\COH^0_{\fm}(R))=\sup\{j\geq 0:\Tor^R_{j}(k,R/\COH^0_{\fm}(R))\neq 0\}.$ Consequently, $\COH^0_{\fm}(R)$ has a finite free resolution. In view of Lemma \ref{depth}, we may and do assume that $\COH^0_{\fm}(R)\neq 0$. Also, $\COH^0_{\fm}(R)\neq R$,
because $d>0$. We can apply \cite[Corollary 1.4.7]{BH} to conclude  that  $\COH^0_{\fm}(R)$ contains an $R$-regular element. Since each element of $\COH^0_{\fm}(R)$
is annihilated by some power of $\fm$ we get to a contradiction. By the proof of  Lemma \ref{o}, $\Supp(\Syz_i(R/ \fm))=\Spec(R)$.
\end{proof}

Let $(R,\fm)$ be a local ring.   Recall that a sequence $x_{1},\ldots ,x_{r} \subset \fm$ is called a \textit{weak sequence} if $\fm ((x_{1},\cdots ,x_{i-1})\colon x_{i})\subseteq(x_{1},\cdots ,x_{i-1})$ for all $i$.
The ring $R$ is called \textit{Buchsbaum} if every system of parameters is a weak sequence.
Now, let $R$ be  Buchsbaum.  Recall from \cite[Lemma 2.4]{bus} that
$\fm H^i_{\fm}(R)=0$ for all $i\neq\dim R$. The converse of this is not true, see \cite[Page 75]{bus}.

 \begin{proposition} \label{m}
Let $(R,\fm,k)$ be a $d$-dimensional ring for which $\fm \COH^0_{\fm}(R)=0$ (e.g. $R$ is Buchsbaum) and that $d>1$. Let $M$ be finite length
such that $\pd_R(M) = \infty$. Then $\Supp(\Syz_i(M))=\Spec(R)$   for all $i> 0$. In particular, $\lambda(\Syz_i(M)) = \infty$.
\end{proposition}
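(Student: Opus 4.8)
The plan is to proceed by contradiction, following the pattern already established in Corollary \ref{vc}. Suppose $\Supp(\Syz_{i+1}(M)) \neq \Spec(R)$ for some $i > 0$; by Lemma \ref{o} this means $\lambda(\Syz_{i+1}(M)) < \infty$. Since a finite length module is locally free on the punctured spectrum, the Vanishing result (Lemma \ref{v}) applies and gives $\Tor^R_i(M, R/\COH^0_{\fm}(R)) = 0$. The first task is to translate this single $\Tor$ vanishing into a statement about the finiteness of a projective dimension. Here is where $\COH^0_{\fm}(R)$ being killed by $\fm$ enters: since $\fm \COH^0_{\fm}(R) = 0$, the module $\COH^0_{\fm}(R)$ is a $k$-vector space, say $\COH^0_{\fm}(R) \cong k^{n}$ for some $n \geq 0$. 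Hence $\Tor^R_i(M, R/\COH^0_{\fm}(R)) = 0$ together with the short exact sequence $0 \to \COH^0_{\fm}(R) \to R \to R/\COH^0_{\fm}(R) \to 0$ feeds into the long exact sequence of $\Tor^R_{\ast}(M, -)$.

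Next I would extract finiteness of projective dimension. From the long exact sequence $\cdots \to \Tor^R_{i+1}(M, R/\COH^0_{\fm}(R)) \to \Tor^R_i(M, \COH^0_{\fm}(R)) \to \Tor^R_i(M, R) \to \Tor^R_i(M, R/\COH^0_{\fm}(R)) \to \cdots$ and $\Tor^R_i(M,R) = 0$ for $i > 0$, one sees that $\Tor^R_i(M, R/\COH^0_{\fm}(R)) \cong \Tor^R_{i-1}(M, \COH^0_{\fm}(R))$ when $i > 1$ (and a small adjustment when $i = 1$). Since $\COH^0_{\fm}(R) \cong k^n$, we get $\Tor^R_{i-1}(M, k)^n = 0$, i.e. $\beta_{i-1}(M)^{?}$… — more precisely $\Tor^R_{i-1}(M,k) = 0$ unless $n = 0$. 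If $n \neq 0$, then $\Tor^R_{i-1}(M,k) = 0$ forces $\pd_R(M) < \infty$ by rigidity of $\Tor$ over — no, we cannot invoke rigidity in general. Instead I would argue more directly: mimic Corollary \ref{vc}. The cleanest route is to observe that $\beta_j(R/\COH^0_{\fm}(R))$ must vanish for $j$ large: indeed, $\pd_R(R/\COH^0_{\fm}(R)) = \sup\{j : \Tor^R_j(k, R/\COH^0_{\fm}(R)) \neq 0\}$, and the vanishing of $\Tor^R_i(M, R/\COH^0_{\fm}(R))$ propagates to bound this (this is the step used in Corollary \ref{vc}); hence $R/\COH^0_{\fm}(R)$ has a finite free resolution. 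Then the ideal-theoretic argument of Corollary \ref{vc} applies verbatim: by Lemma \ref{depth} we may assume $\COH^0_{\fm}(R) \neq 0$, and $\COH^0_{\fm}(R) \neq R$ since $d > 0$; but an ideal with a finite free resolution contains an $R$-regular element by \cite[Corollary 1.4.7]{BH}, contradicting the fact that every element of $\COH^0_{\fm}(R)$ is killed by a power of $\fm$.

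Having derived the contradiction, we conclude $\Supp(\Syz_{i+1}(M)) = \Spec(R)$ for every $i > 0$; combined with the case $i = 0$ (the short exact sequence $0 \to \Syz_1(M) \to R^{\beta_0} \to M \to 0$ and $d > 0$ force $\lambda(\Syz_1(M)) = \infty$, hence $\Supp(\Syz_1(M)) = \Spec(R)$ by Lemma \ref{o}), this gives $\Supp(\Syz_i(M)) = \Spec(R)$ for all $i > 0$. The final clause $\lambda(\Syz_i(M)) = \infty$ is immediate since $d > 0$: a module supported everywhere has dimension $d > 0$, hence infinite length. I expect the main obstacle to be the bookkeeping in passing from the single vanishing $\Tor^R_i(M, R/\COH^0_{\fm}(R)) = 0$ to the finiteness of the free resolution of $\COH^0_{\fm}(R)$ — one must make sure the dimension hypothesis $d > 1$ (rather than merely $d > 0$) is genuinely used, and indeed it should enter because the vanishing result Lemma \ref{v} and the reduction via Lemma \ref{new}/Corollary \ref{vc} require enough room in the resolution; the case $d = 1$ is exactly the one excluded by Example 1.2, so the argument must break there.
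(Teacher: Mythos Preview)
Your proposal contains a genuine gap at the crucial step. After you correctly obtain $\Tor^R_{i-1}(M,\COH^0_{\fm}(R))=0$ and hence $\Tor^R_{i-1}(M,k)^n=0$, you talk yourself out of the right conclusion. The vanishing $\Tor^R_{i-1}(M,k)=0$ does \emph{not} require any rigidity theorem: it says precisely that $\beta_{i-1}(M)=0$, and a single vanishing Betti number already forces $\pd_R(M)\le i-2<\infty$, since in a minimal free resolution a zero term terminates the resolution. This is exactly the contradiction the paper reaches. Your detour through Corollary \ref{vc} does not work: in that corollary $M=k$, so $\Tor^R_i(k,R/\COH^0_{\fm}(R))=0$ is literally a Betti number of $R/\COH^0_{\fm}(R)$; for a general finite-length $M$ there is no mechanism by which $\Tor^R_i(M,R/\COH^0_{\fm}(R))=0$ ``propagates'' to bound $\pd_R(R/\COH^0_{\fm}(R))$, and the claim is simply false in this generality.

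There is a second, structural gap. Your long exact sequence argument needs $i>1$, so it only handles $\Syz_j(M)$ for $j\ge 3$. You treat $\Syz_1(M)$ correctly via the short exact sequence, but $\Syz_2(M)$ is left unaddressed (``a small adjustment when $i=1$'' does not supply one). The paper closes this case with Lemma \ref{new} (the new intersection theorem): since $d>1$, any nonzero $\Syz_2(M)$ must have infinite length. This is precisely where the hypothesis $d>1$ is used, answering the question you raise at the end of your proposal; Example 1.2 shows the argument genuinely fails at $\Syz_2$ when $d=1$.
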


\begin{proof}
 In view of  Lemma \ref{depth} we can assume that
$\depth (R)=0$. In particular, $\COH^0_{\fm}(R)\neq 0$.
Thus, $\COH^0_{\fm}(R)$ is a nonzero $k$-vector space. Suppose first that $i>2$. Suppose on the contradiction that $\Syz_i(M)$ is of finite length.
We can apply Lemma \ref{v}: $\Tor^R_{i-1}(M,R/\COH^0_{\fm}(R))=0$, because $i-1>0$.
Since $i-2>0$ we have $\Tor^R_{i-2}(M,R)=\Tor^R_{i-1}(M,R)=0$.  The long exact sequence induced by $0\to \COH^0_{\fm}(R)\to R \to  R/\COH^0_{\fm}(R)\to 0$ implies that
$\Tor^R_{i-1}(M,R/\COH^0_{\fm}(R))\simeq\Tor^R_{i-2}(M,\COH^0_{\fm}(R))$. Let us display things:
$$0=\Tor^R_{i-1}(M,R/\COH^0_{\fm}(R))\simeq\Tor^R_{i-2}(M,\COH^0_{\fm}(R))\simeq \bigoplus_{\textit{non empty}} \Tor^R_{i-2}(M,k).$$ Recall that   $\pd(M)=\sup\{j\geq 0:\Tor^R_{j}(k,M)\neq 0\}.$ Consequently, $\pd(M)<\infty$. This excluded by the assumption.
This contradiction yields that $\Syz_i(M)$ is  of infinite length for all $i>2$.
Clearly, $\lambda(\Syz_1(M))=\infty$. This follows by looking at the following short exact sequence $0\to\Syz_1(M)\to R^{\beta_0}\to M\to 0.$
The only $j$ that has  chance to $\lambda(\Syz_j(M))<\infty$ is $j=2$. This excluded from Lemma \ref{new}. Here is a place that we use the assumption $d>1$. In sum,
$\lambda(\Syz_i(M))=\infty$ for all $i > 0$.
Finally, we deduce from Lemma \ref{o} that $\Supp(\Syz_i(M))=\Spec(R)$  for all $i> 0$.
\end{proof}

In a similar vein we have:

 \begin{corollary} \label{m1}
Let $(R,\fm,k)$ be a $d$-dimensional ring for which $\COH^0_{\fm}(R)$ is a  $k$-vector space (e.g. $R$ is Buchsbaum) and $d>0$. Let $M$ be locally free
such that $\pd_R(M) = \infty$. Then $\lambda(\Syz_i(M)) = \infty$ for all $i>2$. Suppose in addition  that $R$ is  equidimensional. Then $\dim(\Syz_i(M)) = \dim R$ for all $i>2$.
\end{corollary}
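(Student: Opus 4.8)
The plan is to mirror the argument of Proposition \ref{m}, but with the finite length hypothesis on $M$ replaced by the weaker "locally free on the punctured spectrum" hypothesis, and to check that each ingredient still applies at that level of generality. First I would reduce to the case $\depth(R)=0$: by Corollary \ref{Cdepth} (or directly Lemma \ref{depth} together with Lemma \ref{d}), if $\depth(R)>0$ and $M$ is locally free, then $\lambda(\Syz_i(M))=\infty$ for all $i>0$, which is stronger than what we want. So assume $\depth(R)=0$, hence $\COH^0_{\fm}(R)\neq 0$, and by hypothesis it is a nonzero $k$-vector space.

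Next, fix $i>2$ and suppose for contradiction that $\lambda(\Syz_i(M))<\infty$. The key point is that Lemma \ref{v} (the vanishing result) is stated precisely for modules locally free on the punctured spectrum, so it applies: from $\lambda(\Syz_i(M))<\infty$ with $i-1>0$ we get $\Tor^R_{i-1}(M,R/\COH^0_{\fm}(R))=0$. Then, exactly as in the proof of Proposition \ref{m}, I would use $i-2>0$ so that $\Tor^R_{i-2}(M,R)=\Tor^R_{i-1}(M,R)=0$, apply the long exact Tor-sequence to $0\to \COH^0_{\fm}(R)\to R\to R/\COH^0_{\fm}(R)\to 0$, and obtain
$$0=\Tor^R_{i-1}(M,R/\COH^0_{\fm}(R))\simeq \Tor^R_{i-2}(M,\COH^0_{\fm}(R))\simeq \bigoplus \Tor^R_{i-2}(k,M),$$
where the last isomorphism uses that $\COH^0_{\fm}(R)$ is a finite direct sum of copies of $k$. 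Since $i-2\geq 1$, this forces $\pd_R(M)<\infty$, contradicting the hypothesis. Hence $\lambda(\Syz_i(M))=\infty$ for all $i>2$, which is the first assertion.

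For the second assertion, assume in addition that $R$ is equidimensional. Since $M$ is locally free on the punctured spectrum, Lemma \ref{d} tells us that for each $i$ either $\dim(\Syz_i(M))=\dim R$ or $\lambda(\Syz_i(M))<\infty$. For $i>2$ the latter has just been ruled out, so $\dim(\Syz_i(M))=\dim R$. I expect the only genuine subtlety to be bookkeeping on the range of $i$: Lemma \ref{v} requires $i>0$ there (so that $\Syz_{i+1}$ has finite length with $i\geq 1$), and the Tor manipulation needs $i-2\geq 1$, i.e. $i\geq 3$; this is exactly why the statement restricts to $i>2$ and why, unlike Proposition \ref{m}, we cannot invoke the new intersection theorem (Lemma \ref{new}) to also clean up $\Syz_2$ — $M$ need not be of finite length, so $\Syz_2(M)$ may legitimately fail to be of infinite length. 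No step presents a real obstacle beyond verifying that the cited lemmas were indeed stated for locally free modules, which the excerpt confirms.
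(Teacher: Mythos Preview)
Your proposal is correct and follows essentially the same line as the paper: both arguments reduce to $\depth(R)=0$, invoke Lemma~\ref{v} (stated for locally free modules) to obtain $\Tor^R_{i-1}(M,R/\COH^0_{\fm}(R))=0$, then use the $k$-vector space structure of $\COH^0_{\fm}(R)$ and the long exact Tor sequence to force $\Tor^R_{i-2}(k,M)=0$, contradicting $\pd_R(M)=\infty$; the equidimensional addendum is then Lemma~\ref{d} in both treatments. The only difference worth noting is that the paper disposes of the case $d=1$ separately by citing \cite[Proposition~5.3]{h} and then defers the case $d>1$ to the proof of Proposition~\ref{m}, whereas your write-up runs the Tor computation uniformly for all $d>0$; since the restriction $i>2$ already guarantees $i-2\geq 1$, your version is slightly more self-contained and avoids the external reference.
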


\begin{proof}
The claim in the case $d=1$ follows from \cite[Proposition 5.3]{h} under the assumption $\lambda(M)<\infty$. The same argument works for locally free modules.
Then we may assume that $d>1$. Now, the first desired claim is in Proposition \ref{m}.
If $R$  is equi-dimensional, we deduce from  Lemma \ref{d} that $\dim(\Syz_i(M)) = \dim R$ for all $i>2$.
\end{proof}

\section{Concluding examples}
We start by proving Corollary \ref{1.66}. First, we recall the main point for dealing with weakly reduced rings:

\begin{fact}\label{integral}(See \cite[Corollary 1.2]{ram2})
Let $S$ be a local ring with an integrally closed ideal $I$. Suppose $R:=S/I$ is of zero depth.
Let $M$  be a finitely generated $R$-module.
Then $\{\beta_i(M)\}$ is not decreasing.
\end{fact}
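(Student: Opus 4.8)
This is quoted as \cite[Corollary 1.2]{ram2}; the route I would take to prove it is the following. The plan is to pass through the notion of a \emph{Burch ideal} of a local ring $(S,\fn)$: an ideal $J$ is Burch when $\fn J\neq\fn(J:_S\fn)$, equivalently $\fn J\subsetneq\fn(J:_S\fn)$, since $J\subseteq J:_S\fn$ always. First I would show that an integrally closed ideal $I$ with $\depth(S/I)=0$ is Burch, then feed this into the structure theory of minimal free resolutions over a Burch quotient. (As $\depth(S/I)=0$, a finitely generated $R$-module has finite projective dimension exactly when it is free, so the content of the statement is the case $\pd_R M=\infty$, which is also the only regime in which the paper uses it.)

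\emph{Step 1: that $I$ is Burch --- the only place integral closure enters.} If $\dim S=0$ then $S$ is artinian, its nilradical is $\fn$, and $\overline{(0)}=\fn$; hence $\overline I=\fn$ for every proper ideal $I$, so an integrally closed proper $I$ must equal $\fn$, $R=S/I$ is a field, and there is nothing to prove. Assume then $\dim S>0$, so that $\fn/\fq\neq 0$ for every $\fq\in\Min(S)$. Since $\depth(S/I)=0$ we have $\fn\in\Ass(S/I)$, i.e. $I:_S\fn\supsetneq I$; fix $y\in(I:_S\fn)\setminus I$. If $I$ were not Burch, so $\fn I=\fn(I:_S\fn)$, then $\fn y\subseteq\fn(I:_S\fn)=\fn I$; applying the determinant trick to the finitely generated $(S/\fq)$-module $\fn/\fq$, which is faithful because $S/\fq$ is a domain with $\fn/\fq\neq 0$, the containment $\overline{y}\,(\fn/\fq)\subseteq I\,(\fn/\fq)$ gives $\overline{y}\in\overline{I(S/\fq)}$ for every $\fq\in\Min(S)$; since integral closure is detected on the minimal primes of $S$, this forces $y\in\overline I=I$, a contradiction. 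Hence $I$ is Burch.

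\emph{Step 2, and the main obstacle.} Granting that $I$ is Burch, I would invoke the Burch-ring machinery (Dao--Kobayashi--Takahashi), which is in effect what \cite[Corollary 1.2]{ram2} delivers: over $R=S/I$ with $I$ a Burch ideal, $k$ is a direct summand of $\Syz_i(N)$ for every non-free finitely generated $R$-module $N$ and all $i\gg 0$, and consequently the Betti numbers of any $R$-module of infinite projective dimension are non-decreasing --- intuitively because passing to one further syzygy replaces the summand $k$ by $\fm$. The conceptual heart is Step~1: the determinant trick requires a faithful module, so when $S$ fails to be a domain one has to descend to the minimal primes and check that the image of $I$ stays a proper nonzero ideal there (which is exactly why the degenerate case $I=\fn$, i.e. $R=k$, must be isolated first), and that $y\notin I$ is not lost in the reduction. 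Step~2, once the Burch property is established, is a citation; carrying it out from scratch --- via the change of rings $S\to R$ and a suitable Burch element of $\fn$ --- would be the longer and more technical part, and is where I would expect the real work to lie.
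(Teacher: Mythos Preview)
The paper gives no proof of this fact; it is stated with a bare citation to Gover--Ramras and then used. You correctly note this, and your two-step sketch is sound. In fact it recapitulates the structure of the Gover--Ramras paper itself: their Corollary~1.2 is deduced from their main result (over $R=S/I$ with $\fn I\neq\fn(I:_S\fn)$, the Betti numbers of any finitely generated $R$-module are nondecreasing) via precisely the integral-closure reduction you carry out in Step~1, including the passage to $S/\fq$ for $\fq\in\Min(S)$ and the determinant trick on the faithful module $\fn/\fq$.

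The one comment is on Step~2: invoking the Dao--Kobayashi--Takahashi $k$-summand theorem is both anachronistic (it postdates Gover--Ramras by four decades and postdates the present paper as well) and stronger than what is needed. The original argument passes directly from the condition $\fn I\neq\fn(I:_S\fn)$ to $\beta_{i+1}(M)\geq\beta_i(M)$ by an elementary manipulation with a minimal free resolution, without any direct-summand statement. Since you flag Step~2 as ``a citation'' anyway, this is a matter of which citation to make rather than a gap; but if you want a self-contained route, the 1980 argument is the one to reproduce, not the 2020 one.
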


 Now, we extend Observation D in the following sense:

\begin{corollary}
Let $R$ be a weakly reduced local ring of dimension $d>1$ and $M$ a finite length module of infinite projective dimension. Then $\Supp(\Syz_{i}(M))=\Spec (R)$ for all $i> 0$.
\end{corollary}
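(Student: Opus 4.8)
The plan is to reduce the statement to Lemma \ref{depth} exactly as was done for genuinely reduced rings in Corollary \ref{red1}, but with Fact \ref{integral} playing the role that Serre's condition $(S_1)$ played there. First I would dispose of trivial cases: if $\pd_R(M)<\infty$ there is nothing to prove since all high syzygies vanish, so assume $\pd_R(M)=\infty$; and the hypothesis $d>1$ (in particular $d>0$) is already built in. Write $R=S/I$ with $S$ local and $I$ a nonzero proper integrally closed ideal, as in the definition of weakly reduced.

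The crux is to show $\depth(R)>0$, after which Lemma \ref{depth} immediately gives $\Supp(\Syz_i(M))=\Spec(R)$ for all $i>0$ and we are done. Suppose for contradiction that $\depth(R)=0$. Then the hypotheses of Fact \ref{integral} are met: $S$ is local, $I\subseteq S$ is integrally closed, $M$ is a finitely generated module (over $R$, hence over $S$), and $R=S/I$ has zero depth. Fact \ref{integral} then forces the Betti sequence $\{\beta_i(M)\}$ to be non-decreasing. But $M$ has infinite projective dimension, so in particular $\beta_i(M)\geq\beta_{i+1}(M)$ fails to be eventually zero — more precisely, $\beta_i(M)\neq 0$ for all $i$, and non-decreasing means $\beta_i(M)\leq\beta_{i+1}(M)$ for all $i$. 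Now I would invoke Corollary \ref{mis}: a finite length module $M$ with $\beta_i(M)\leq\beta_{i+1}(M)$ for all $i>0$ satisfies $\Supp(\Syz_{2i+1}(M))=\Spec(R)$ for all $i\geq 0$; combined with $d>1$ and Lemma \ref{new} (which handles $\Syz_2$, here using $d>1$), one gets $\Supp(\Syz_i(M))=\Spec(R)$ for every $i>0$, which is precisely the desired conclusion — so the depth-zero case is actually not an obstruction but just a second route to the same answer.

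Let me streamline this into a single clean argument rather than a case split. If $\depth(R)>0$, apply Lemma \ref{depth} directly. If $\depth(R)=0$, then by Fact \ref{integral} the sequence $\{\beta_i(M)\}_{i\geq 0}$ is non-decreasing, so in particular $\beta_i(M)\leq\beta_{i+1}(M)$ for all $i>0$, and Corollary \ref{mis} gives $\Supp(\Syz_{2i+1}(M))=\Spec(R)$ for all $i\geq 0$. It remains to treat the even-indexed syzygies. For $i\geq 1$ we have the exact sequence $0\to\Syz_{2i+1}(M)\to R^{\beta_{2i}}\to\Syz_{2i}(M)\to 0$; since $\Syz_{2i+1}(M)$ has support all of $\Spec(R)$ and in fact $\lambda(\Syz_{2i+1}(M))=\infty$ by Lemma \ref{o}, and $\Syz_{2i}(M)\neq 0$, one argues that $\Syz_{2i}(M)$ cannot have finite length either — otherwise localizing at a non-maximal prime $\fp$ would make $\Syz_{2i+1}(M)_\fp\cong R_\fp^{\beta_{2i}}$, contradicting that $\Syz_{2i+1}(M)_\fp$ is free of positive rank is fine, so instead use the Euler-characteristic criterion of Lemma \ref{o}: $\lambda(\Syz_{2i}(M))<\infty$ would force $\sum_{j=0}^{2i-1}(-1)^{2i-1-j}\beta_j(M)\le 0$, while the same finiteness for $\Syz_{2i+1}(M)$ is already excluded. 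Cleanest is to note $\Syz_{2i+2}(M)=\Syz_2(\Syz_{2i}(M))$ and apply Lemma \ref{new} together with $d>1$, after checking $\Syz_{2i}(M)$ has finite length would propagate downward via Lemma \ref{desend} to contradict $\lambda(\Syz_1(M))=\infty$ — this is exactly the mechanism in the proof of Corollary \ref{mis}, so the honest thing is simply to observe that the full conclusion $\Supp(\Syz_i(M))=\Spec(R)$ for all $i>0$ already follows by running the proof of Corollary \ref{mis} verbatim once we know $\{\beta_i(M)\}$ is non-decreasing, since that proof only used the non-decreasing Betti hypothesis, Lemma \ref{o}, Lemma \ref{desend}, and (implicitly, for the $\Syz_2$ case) $d>1$ via Lemma \ref{new}.

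The main obstacle is purely bookkeeping: Corollary \ref{mis} as literally stated only asserts $\Supp(\Syz_{2i+1}(M))=\Spec(R)$ for odd indices, so I must either extend its proof to cover even indices in the depth-zero case (using $d>1$ through Lemma \ref{new} to kill the $\Syz_2$-obstruction, and Lemma \ref{desend} to propagate finiteness of $\lambda(\Syz_{2i}(M))$ downward to a contradiction with $\lambda(\Syz_1(M))=\infty$), or sidestep by remarking that the depth-zero branch is anyway subsumed once we reprove Corollary \ref{mis}'s statement for all indices under the extra hypothesis $d>1$. Given the paper's style, I would just write: "By Fact \ref{integral} we may assume $\depth(R)>0$ (if $\depth(R)=0$, then $\{\beta_i(M)\}$ is non-decreasing and the argument of Corollary \ref{mis}, together with Lemma \ref{new} and the hypothesis $d>1$, gives the claim); now apply Lemma \ref{depth}." This keeps it to a few lines and matches the level of detail elsewhere in the paper.
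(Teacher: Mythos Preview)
Your approach is exactly the paper's: reduce to $\depth(R)=0$ via Lemma \ref{depth}, invoke Fact \ref{integral} to get non-decreasing Betti numbers, handle the odd syzygies by Corollary \ref{mis}, and for the even syzygies use Lemma \ref{new} (this is where $d>1$ enters) to get $\lambda(\Syz_2(M))=\infty$ and then descend via Lemma \ref{desend}. One small slip to clean up: the even-index descent $2i\to 2i-2\to\cdots$ under Lemma \ref{desend} terminates at $\Syz_2$, not $\Syz_1$, so the contradiction is with $\lambda(\Syz_2(M))=\infty$ from Lemma \ref{new}, not with $\lambda(\Syz_1(M))=\infty$ as you wrote.
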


\begin{proof}
 In view of Lemma \ref{depth} we may assume that $\depth (R)=0$. Thus, we are in a situation to  apply Fact \ref{integral}, i.e.,
$\{\beta_i(M)\}$ is not decreasing. We apply Corollary \ref{mis} to observe that $\Supp(\Syz_{2i+1}(M))=\Spec (R)$ for all $i> 0$.
Recall from Lemma \ref{new} that $\lambda(\Syz_{2}(M))=\infty$
(here we used the assumption $d>1$).
Suppose for some $i>1$  we have $\Supp(\Syz_{2i}(M))\neq\Spec(R)$.
Due to Lemma \ref{o}, $\lambda(\Syz_{2i}(M))<\infty$.
We apply this along with Lemma \ref{desend} to observe that
$\lambda(\Syz_{2i-2}(M))<\infty$. If $2i-2\neq 2$ we can repeat the argument to observe that $\lambda(\Syz_{2}(M))<\infty$ a contradiction.
Thus, $\Supp(\Syz_{2i}(M))=\Spec(R)$. Consequently, $\Supp(\Syz_{i}(M))=\Spec (R)$ for all $i> 0$.
\end{proof}

\begin{lemma}(See \cite[Proposition 2.1]{ram2}) \label{ram2}
Let $I$ be a  non-nilpotent ideal in a local ring $(S,\fn)$. Set $R:=\frac{S}{I\fn}$. Let $M$ be a finitely generated $R$-module. Then $M$
 has  increasing betti numbers.
\end{lemma}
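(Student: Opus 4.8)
The plan is to extract the socle ideal hidden in the construction $R=S/I\fn$ and convert it into a degree-shifting isomorphism on $\Tor$. Write $\bar I:=I/I\fn$ and $\fm:=\fn/I\fn$, so $k=R/\fm$, and assume $I$ is proper (otherwise $R$ is a field and every Betti sequence is eventually zero). First I would record the structural facts. Since $I$ is non-nilpotent it is nonzero, and $I\neq I\fn=\fn I$ (else Nakayama forces $I=0$), so $\bar I\neq 0$; moreover $\fm\bar I=(\fn I+I\fn)/I\fn=0$, whence $\bar I\subseteq(0:_R\fm)=\operatorname{socle}(R)$ and $\bar I\cong k^{\,r}$ with $r=\mu_S(I)\geq 1$. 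In particular $\depth R=0$, so no preliminary reduction is needed, and by Auslander--Buchsbaum every $R$-module of finite projective dimension is free; thus for non-free $M$ all $\beta_i(M)$ are positive and the claim is the inequality $\beta_i(M)\leq\beta_{i+1}(M)$ for $i\geq 1$. I emphasize the range $i\geq1$: the inequality genuinely can fail at the very bottom (over $R=k[x,y]/(x,y)^2=S/\fn\fn$ the module $R^2/R(x,y)$ has Betti sequence $2,1,2,4,8,\dots$), and indeed only positive degrees are needed for the application in Corollary \ref{mis}.

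Next I would fix a minimal free resolution $\mathbf F:\ \cdots\to F_i\xrightarrow{d_i}F_{i-1}\to\cdots$ of $M$, with $F_i=R^{\beta_i}$ and all entries of $d_i$ in $\fm$. Because $\fm\bar I=0$, multiplication by $\bar I$ produces a subcomplex $\bar I\mathbf F$ with $\bar I F_i\cong\bar I^{\beta_i}\cong k^{r\beta_i}$ and zero differentials, lying inside $\ker d_i=\im d_{i+1}$. The clean homological input comes from the short exact sequence $0\to\bar I\to R\to A\to 0$, where $A:=R/\bar I=S/I$: applying $\Tor^R(M,-)$ and using $\Tor^R_{\geq 1}(M,R)=0$ gives, for every $i\geq 1$, a connecting isomorphism $\Tor^R_{i+1}(M,A)\xrightarrow{\ \sim\ }\Tor^R_{i}(M,\bar I)\cong\Tor^R_{i}(M,k)^{\,r}$, so that $\dim_k\Tor^R_{i+1}(M,A)=r\,\beta_i(M)$.

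To finish I would compare this with $\beta_{i+1}(M)=\dim_k\Tor^R_{i+1}(M,k)$ via the natural surjection $A=S/I\twoheadrightarrow S/\fn=k$ (legitimate since $I\subseteq\fn$), which induces $p_\ast\colon\Tor^R_{i+1}(M,A)\to\Tor^R_{i+1}(M,k)$. If $p_\ast$ is injective for $i\geq 1$, then $r\,\beta_i(M)\leq\beta_{i+1}(M)$, and since $r\geq 1$ this yields $\beta_i(M)\leq\beta_{i+1}(M)$, as desired. \textbf{The main obstacle is precisely the injectivity of $p_\ast$}, equivalently (from the long exact sequence of $0\to\fn/I\to A\to k\to 0$) the vanishing of $j_\ast\colon\Tor^R_{i+1}(M,\fn/I)\to\Tor^R_{i+1}(M,A)$. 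This is where the hypothesis $\fa=I\fn$ must enter essentially, not merely the vanishing $\fm\bar I=0$: over an arbitrary depth-zero ring Betti numbers can strictly decrease, and a naive attempt to lift the socle classes $\bar I F_i\subseteq\im d_{i+1}$ back to minimal generators of $F_{i+1}$ collapses exactly when $\bar I\subseteq\fm^2$ (for instance $S=k[[x]]$, $I=(x^2)$, $R=k[[x]]/(x^3)$), so the comparison must be made at the level of the whole complex rather than element by element.

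I expect to secure the injectivity of $p_\ast$ by interpreting it through the graded module structure of $\Ext_R(M,k)$ over $\Ext_R(k,k)$, realizing the degree-shift isomorphism above as the action of the socle class of $0\to\bar I\to R\to A\to 0$ and showing that this class acts without kernel in positive degrees; this is the technical heart of the argument and is the same engine that underlies Fact \ref{integral}. Once $p_\ast$ is controlled, the bookkeeping (assembling the inequality for all $i\geq 1$ and discarding a possible free summand of $M$) is routine.
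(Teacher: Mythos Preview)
A preliminary remark: the paper does not supply its own proof of this lemma; it is quoted from Gover--Ramras \cite{ram2} and used as a black box, so there is nothing in the paper to compare against beyond the citation.

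On the substance: your setup through the connecting isomorphism $\Tor^R_{i+1}(M,A)\cong\Tor^R_i(M,\bar I)\cong\Tor^R_i(M,k)^{r}$ for $i\ge1$ is correct, but the step you yourself flag as ``the main obstacle'' --- the injectivity of $p_*\colon\Tor^R_{i+1}(M,A)\to\Tor^R_{i+1}(M,k)$ --- is not merely unproved, it is \emph{false}. Take $S=k[[x]]$ and $I=(x^2)$ (non-nilpotent), so $R=k[x]/(x^3)$, $\bar I=x^2R$, and $A=R/\bar I\cong k[x]/(x^2)$; let $M=k$. The minimal $R$-resolution of $k$ is periodic with differentials alternating between $\cdot x$ and $\cdot x^2$. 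Tensoring with $A$ kills the $x^2$-maps, and one computes $\Tor^R_3(k,A)=\ker(\cdot x\colon A\to A)/0=xA\cong k$, generated by the class of $x\in A$. The projection $p\colon A\to k$ sends $x\mapsto 0$, so $p_*\colon\Tor^R_3(k,A)\to\Tor^R_3(k,k)$ is the \emph{zero} map. Thus for $i=2$ the map $p_*$ is not injective, and no reinterpretation via the $\Ext_R(k,k)$-action will make an identically zero map injective. The lemma still holds in this example (all $\beta_i(k)=1$), so it is your route, not the statement, that is blocked: the bridge from $\Tor^R_{i+1}(M,A)$ to $\Tor^R_{i+1}(M,k)$ via the surjection $A\to k$ is the wrong one. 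Note too that your outline never actually uses the hypothesis that $I$ is non-nilpotent (only that $I\neq0$); the genuine argument must exploit it, and you should consult the original Gover--Ramras proof.
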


\begin{example}
Look at the ring $R:=k[[X,Y]]/X(X,Y)^n$ for some $n>0$. Then $\lambda(\Syz_{i}(R/\fm^n))=\infty$ for all $i>0$.
In fact, $\Supp(\Syz_{i}(R/\fm^n))=\Spec(R)$.
\end{example}

\begin{proof}
 Clearly, $\lambda(\Syz_{1}(R/\fm^n))=\lambda(\fm^n)=\infty$. Let $i>0$ and apply Lemma  \ref{ram2} for $I:=X(X,Y)^{n-1}\lhd k[[X,Y]]$ to see $\beta_i(M) \geq \beta_{i-1}(M)$ for any $R$-module. Thus, Lemma \ref{desend}
 implies that $\lambda(\Syz_{2i+1}(R/\fm^n))=\infty$. Similarly, $\lambda(\Syz_{2i}(R/\fm^n))=\infty$ provided $\lambda(\Syz_{2}(R/\fm^n))=\infty$. Hence, things reduce to show $\lambda(\Syz_{2}(R/\fm^n))=\infty$.
One has $\COH^0_{\fm}(R)\supseteq xR $. This annihilated by $\fm^n$. Suppose on the contrary that $\lambda(\Syz_{2}(R/\fm^n))<\infty$.
In the light of  vanishing result (see Lemma \ref{v}) we deduce that
$\Tor^R_1(R/\fm^n,R/\COH^0_{\fm}(R))= 0.$ But  $$\Tor^R_1(R/\fm^n,R/\COH^0_{\fm}(R))= \frac{\fm^n\cap \COH^0_{\fm}(R)}{\fm^n \COH^0_{\fm}(R)}=\fm^n\cap \COH^0_{\fm}(R).$$To get a contradiction it is enough to note that $0\neq x^n\in\fm^n\cap(x)\subseteq \fm^n\cap\COH^0_{\fm}(R)$. Therefore, $\lambda(\Syz_{2}(R/\fm^n))=\lambda(\fm^n)=\infty$. We conclude from Lemma \ref{o} that $\Supp(\Syz_i(M))=\Spec (R)$ for all $i>0$.
\end{proof}

\begin{example}
Let $0\neq f$ be a non-unit power series in $k[[X_1,\ldots,X_n]]$  with $n>2$ and let $R:=k[[X_1,\ldots,X_n]]/f\fm$. Let $M$ be locally free and of infinite projective dimension. Then $\dim (\Syz_i(M))=\dim R$  for all $i>2$. If $\lambda(M)<\infty$, then $\dim (\Syz_i(M))=\dim R$  for all $i>0$.
\end{example}

In the above example we have $\Ass(R)\neq\assh(R)$, because $\fm\in \Ass(R)\setminus\assh(R)$. Also, $R$ is not reduced, e.g. $f^2=0$ and $f\neq 0$.

\begin{proof}
We have $\COH^0_{\fm}(R)=fR$. Thus, $\COH^0_{\fm}(R)\neq 0$ and that $\fm \COH^0_{\fm}(R)=0$. Consequently,
 $\COH^0_{\fm}(R)$ is a nonzero $k$-vector space.
Note that $\min(R)=\{(f_i):f_i \textit{ is an irreduciable component of  } f\}.$ Thus $R$ is equi-dimensional. We deduce from    Corollary \ref{m1} that
$\dim (\Syz_i(M))=\dim R$  for all $i>2$. If $\lambda(M)<\infty$, then we use Proposition  \ref{m} to observe that $\dim (\Syz_i(M))=\dim R$  for all $i>0$.
\end{proof}

\begin{fact}\label{Ramras}(See\cite[Theorem 3.2]{ram})
Let $I$ be an  ideal in a normal local ring $(S,\fn)$  which is not contained in any  height one prime. Set $R:=S/I\fn$. Let $M$ be  finitely generated and non-free. Then $M$
 has  strictly increasing betti numbers.
\end{fact}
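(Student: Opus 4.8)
The plan is to reduce the assertion to a single‑step inequality for syzygy modules and then to prove that inequality by lifting a minimal presentation to $S$ and invoking the normality of $S$.

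\emph{Reduction.} Put $\bar I:=IR=I/I\fn$. It is nonzero by Nakayama (as $I\neq0$, since $\Ht I\ge2$) and is killed by $\fm_R$, because $\fm_R\bar I=\fn I/I\fn=0$; hence $\bar I\subseteq\operatorname{soc}(R)$ and $\depth R=0$. By Auslander--Buchsbaum, every non-free finitely generated $R$-module has infinite projective dimension, and therefore so does every one of its syzygies; in particular $\Syz_i(M)$ is non-free for all $i$. Since $S$ is a domain and $I\neq0$, the ideal $I$ is non-nilpotent, so Lemma~\ref{ram2} gives $\beta_i(M)\le\beta_{i+1}(M)$ for every $i$. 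Because $\beta_i(M)=\beta_0(\Syz_i(M))$ and $\beta_{i+1}(M)=\beta_1(\Syz_i(M))$, it suffices to prove: $(\star)$ if $0\neq N$ is a non-free $R$-module that embeds into a finite free $R$-module, then $\beta_1(N)>\beta_0(N)$.

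\emph{Set-up over $S$.} Every syzygy module $\Syz_j(L)$ with $j\ge1$ is annihilated by $\bar I$, as it lies in $\fm_R F$ for a free module $F$ and $\fm_R\bar I=0$; so $\Syz_1(N)$ is a module over $\bar R:=S/I$, and since $\fm_R=\fn R$ we have $\beta_0(N)=\mu_S(N)$ and $\beta_1(N)=\mu_S(\Syz_1(N))$. Suppose, toward a contradiction, that $\beta_1(N)=\beta_0(N)=:n$ (we already know $\ge$ holds, by Lemma~\ref{ram2}). A minimal $R$-presentation of $N$ is then of the form $R^{n}\xrightarrow{\bar\varphi}R^{n}\to N\to0$ with the entries of $\bar\varphi$ in $\fm_R$; lift $\bar\varphi$ to an $n\times n$ matrix $\varphi$ over $\fn$ and set $C:=\operatorname{coker}_S(\varphi)$. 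Then $C$ is a nonzero $S$-module with $\mu_S(C)=n$ and $C/I\fn C\cong N$, and the fact that the $n$ columns of $\bar\varphi$ minimally generate $\Syz_1^R(N)$ translates into a strong nondegeneracy property of the lift $\varphi$.

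\emph{Using normality.} Since $N$ embeds in a finite free $R$-module, $\Ass_S(N)\subseteq\Ass_S(R)$, and every associated prime of the $S$-module $R=S/I\fn$ has height $\ge2$, because $\sqrt{I\fn}=\sqrt I$ and all minimal primes of $I$ have height $\ge2$. Consequently $\Supp_S(N)\subseteq\V(I)$, which has codimension $\ge2$, whereas $S$, being normal, is regular in codimension $\le1$. Localising at a prime $\fp$ of height $\le1$ one has $I\not\subseteq\fp$, hence $(I\fn)_{\fp}=S_{\fp}$ and $N_{\fp}=0$; feeding this, the nondegeneracy of $\varphi$, and the codimension-$\le1$ behaviour of the syzygy $N$ over the normal ring $S$ back into the presentation $\varphi$ forces $\varphi$ to be invertible, so $C=0$ and $N=0$ --- contradicting that $N$ is non-free. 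This proves $(\star)$; applied to $N=\Syz_i(M)$ for $i\ge1$ it gives $\beta_i(M)<\beta_{i+1}(M)$ for all $i\ge1$, while $\beta_0(M)\le\beta_1(M)$ by Lemma~\ref{ram2}, which is the asserted strict monotonicity.

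\emph{The main obstacle.} The heart of the matter is the last step. A bare support/codimension count does not settle it: it is entirely possible for $C$ to have codimension-one support while $N=C/I\fn C$ is supported only in codimension $\ge2$. One must instead combine the homological content of the equality $\beta_1(N)=\beta_0(N)$ (near‑vanishing of the relevant $\Tor^S_1$), the local freeness of the first syzygy $N$ over $S_{\fp}$ in codimension $\le1$ afforded by normality, and the hypothesis $\Ht I\ge2$, and argue that these cannot coexist unless $N$ is free. Making this interplay precise --- possibly with different bookkeeping than sketched above --- is exactly \cite[Theorem~3.2]{ram}.
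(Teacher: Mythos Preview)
The paper does not prove this statement at all: it is recorded as a fact and attributed to \cite[Theorem~3.2]{ram}. So there is no proof in the paper to compare against.

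Your proposal is not a proof but an outline that defers the decisive step back to the very reference the paper cites. You say so yourself in the final paragraph: the argument that the equality $\beta_1(N)=\beta_0(N)$ together with normality and $\Ht I\ge2$ forces $N$ to be free is ``exactly \cite[Theorem~3.2]{ram}.'' Everything preceding that paragraph is routine bookkeeping (the socle computation, the reduction to a first-syzygy statement, the lift of the presentation to $S$); the substance of Ramras's theorem is precisely the step you leave as a black box. In the ``Using normality'' paragraph the sentence beginning ``feeding this \ldots\ forces $\varphi$ to be invertible'' is an assertion, not an argument, and you immediately concede that a support/codimension count cannot close the gap.

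There is also a loose end in your conclusion: you obtain $\beta_i(M)<\beta_{i+1}(M)$ only for $i\ge1$ (since $(\star)$ requires $N$ to sit inside a free module, which you know for $\Syz_i(M)$ with $i\ge1$ but not for $M$ itself), and then appeal to Lemma~\ref{ram2} for $\beta_0(M)\le\beta_1(M)$. That is not strict, so ``strictly increasing'' at $i=0$ is not established by what you wrote. If you intend your write-up to stand on its own, you must either supply the missing normality argument in full or simply cite Ramras as the paper does.
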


\begin{fact}\label{f5}(See \cite{ext})
i) If $\beta_i(M)>\beta_{i-1}(M)$, then $\Supp(\Syz_{i+1}(M))
 =\Spec(R)$. In particular, $\dim(\Syz_{i+1}(M))=\dim R$. Suppose on the contradiction that there is a $\fp\in\Spec(R)\setminus\Supp(\Syz_{i+1}(M))$. We may assume that $\fp\in\min(R)$. Thus
$\ker(f_i)_{\fp}=\Syz_{i+1}(M)_{\fp}=0$. So $(f_i)_{\fp}:R_{\fp}^{\beta_{i}}\to R_{\fp}^{\beta_{i-1}}$ is injective. This contradicts  $\beta_i(M)>\beta_{i-1}(M)$. Similarly:

 ii)  If $\beta_i(M)<\beta_{i-1}(M)$, then $\Supp(\Syz_{i-1}(M))
 =\Spec(R)$. In particular, $\dim(\Syz_{i-1}(M))=\dim R$.
 \end{fact}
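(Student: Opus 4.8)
The plan is to read the two maps that matter straight off a minimal free resolution
$$\cdots\lo R^{\beta_{i+1}}\stackrel{f_{i+1}}\lo R^{\beta_i}\stackrel{f_i}\lo R^{\beta_{i-1}}\lo\cdots$$
of $M$, recalling the conventions $\Syz_{i+1}(M)=\ker(f_i)$ and $\Syz_{i-1}(M)=\coker(f_i)$, and to detect whether the support is all of $\Spec(R)$ by localizing at the \emph{minimal} primes of $R$, where each localization $R_{\fp}$ is artinian and rank comparisons between finite free modules are controlled by length.

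First, for part i), I would argue by contradiction. If $\Supp(\Syz_{i+1}(M))=\V(\Ann\Syz_{i+1}(M))$ were not all of $\Spec(R)$, then this closed set could not contain every minimal prime of $R$ (otherwise $\Ann\Syz_{i+1}(M)$ would lie in all minimal primes, hence be nilpotent, forcing the support to be $\Spec(R)$). So there is $\fp\in\min(R)$ with $\Syz_{i+1}(M)_{\fp}=0$, i.e. $\ker(f_i)_{\fp}=0$, so $(f_i)_{\fp}\colon R_{\fp}^{\beta_i}\to R_{\fp}^{\beta_{i-1}}$ is injective. Since $\fp$ is minimal, $R_{\fp}$ is artinian, and an injection between finite free $R_{\fp}$-modules gives $\beta_i\,\lambda(R_{\fp})=\lambda(R_{\fp}^{\beta_i})\le\lambda(R_{\fp}^{\beta_{i-1}})=\beta_{i-1}\,\lambda(R_{\fp})$, whence $\beta_i\le\beta_{i-1}$, contradicting $\beta_i>\beta_{i-1}$. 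Thus $\Supp(\Syz_{i+1}(M))=\Spec(R)$; and since $\dim N=\dim(R/\Ann N)=\dim\Supp N$ for any finitely generated $N$, the equality $\dim\Syz_{i+1}(M)=\dim R$ is immediate.

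Part ii) is the mirror image: if $\Supp(\Syz_{i-1}(M))\ne\Spec(R)$, pick $\fp\in\min(R)$ with $\Syz_{i-1}(M)_{\fp}=\coker(f_i)_{\fp}=0$, so that $(f_i)_{\fp}\colon R_{\fp}^{\beta_i}\to R_{\fp}^{\beta_{i-1}}$ is now \emph{surjective}; the same length count over the artinian ring $R_{\fp}$ gives $\beta_i\ge\beta_{i-1}$, contradicting $\beta_i<\beta_{i-1}$. The argument is essentially bookkeeping with the minimal free resolution, so I do not expect a serious obstacle; the only points that deserve a line of care are the reduction ``we may assume $\fp\in\min(R)$'' (a nonempty open subset of $\Spec(R)$ must meet $\min(R)$, via the nilradical being the intersection of the minimal primes) and the remark that minimality of $\fp$ makes $R_{\fp}$ artinian, which is exactly what turns an injection or surjection of free $R_{\fp}$-modules into an inequality of ranks.
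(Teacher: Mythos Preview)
Your proof is correct and follows essentially the same route as the paper: argue by contradiction, reduce to a minimal prime $\fp$ outside the support, and read off an injection (resp.\ surjection) $(f_i)_{\fp}\colon R_{\fp}^{\beta_i}\to R_{\fp}^{\beta_{i-1}}$ that contradicts the assumed Betti inequality. The only difference is that you supply the details the paper leaves implicit---namely the justification for passing to $\fp\in\min(R)$ and the artinian length count that converts the injection/surjection into a rank inequality---so your write-up is slightly more explicit but not a different argument.
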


\begin{remark}
Let us recall that the results are in the realm of commutative rings. We just present a funny point: a ring $A$ is said to have invariant basis number property if $A^n\simeq A^m$ implies that $n=m$ for all $n$ and $m$. There are rings without  invariant basis number property.
\end{remark}

\begin{example}
Let $\fp$ be a height two prime ideal in   $k[[X_1,\ldots,X_n]]$  and let $R:=k[[X_1,\ldots,X_n]]/\fp\fm^t$ for some $t\geq0$. Let $M$ be finitely generated and non-free.
Then $\Supp(\Syz_i(M))=\Spec (R)$  for all $i>1$. If $\lambda(M)<\infty$, then the same claim holds for $i=1$.
\end{example}

\begin{proof}
If $t=0$, then $R=k[[X_1,\ldots,X_n]]/\fp$ is an integral domain. In view of Fact \ref{oki}  we get the claim. Thus, we may assume that $t>0$.  Set $I:=\fp \fm^{t-1}$. Then $I$ is not contained in any  height one prime ideal. In the light of Fact \ref{Ramras}, $\beta_{i+1}(M)>\beta_{i}(M)$. Due to  Fact \ref{f5}, $\Spec(\Syz_i(M))=\Spec (R)$  for all $i>1$. Without loss of generality we assume that $\dim R>0$. Now if $\lambda(M)<\infty$, in view of $0\to \Syz_1(M)\to R^{\beta_{1}}\to M\to 0$, we get that $\dim (\Syz_1(M))=\dim R$. By Lemma \ref{o}, we have $\Supp(\Syz_1(M))=\Spec (R)$.
\end{proof}

  When is $\lambda(\Syz_{2}(M))<\infty$?
If such a thing happens for a finite length module, then the ring is $1$-dimensional  with a nonzero nilpotent.


\end{document}